\documentclass[preprint,12pt,authoryear]{elsarticle}  

\usepackage{amsmath} 
\usepackage{bm}
\usepackage{amsthm}
\usepackage{hypernat}
\usepackage[hidelinks]{hyperref}
\usepackage[T1]{fontenc}
\usepackage{booktabs}
\usepackage{extarrows}
\usepackage{enumerate}
\usepackage{enumitem} 
\usepackage{mathtools}
\usepackage{xcolor}
\usepackage{xfrac}
\usepackage{graphicx}
\usepackage{amssymb}
\usepackage{natbib}
\setcitestyle{numbers,square}
\usepackage{mathrsfs}

\everymath{\displaystyle}

\theoremstyle{definition}

\theoremstyle{plain}

\newtheorem{thm}{Theorem}[section]
\newtheorem*{cthm}{Theorem}
 \newtheorem{cor}[thm]{Corollary}
 \newtheorem{lem}[thm]{Lemma}
 \newtheorem{prop}[thm]{Proposition}
 \theoremstyle{definition}
 
 \theoremstyle{remark}

 \numberwithin{equation}{section}

\newenvironment{re}{\textbf{Remark.}}{\hfill $\qedsymbol$ \par}
\hypersetup{colorlinks=true, citecolor=cyan, linkcolor=cyan, urlcolor=cyan}
\newcommand{\C}{\mathbb{C}}
\newcommand{\R}{\mathbb{R}}
\newcommand{\N}{\mathbb{N}}

\journal{}

\begin{document}
	\begin{frontmatter}
		
		
		\title{Banach Hardy--Sobolev Spaces on the Upper Half-plane and Operator Theory}
		
		
		\author[author1]{Haoxian Liang}
		\author[author2]{Haichou Li}
		\author[author3]{Tao Qian}
		
		\address[author1]{College of Mathematics and Informatics, South China Agricultural University, Guangzhou, China. Email: 934585789@qq.com}
		\address[author2]{Corresponding author. College of Mathematics and Informatics, South China Agricultural University, Guangzhou, China. Email: hcl2016@scau.edu.cn}
		\address[author3]{Faculty of Innovation Engineering, Macau University of Science and Technology, Macau, China. Email: tqian@must.edu.mo} 


\begin{abstract}
We study Hardy--Sobolev spaces $H_n^p(\C^+)$ on the upper half-plane for
$1\le p\le\infty$ and $n\in\mathbb N$, from both function-theoretic and
operator-theoretic viewpoints. We establish an isometric boundary
characterization of $H_n^p(\C^+)$ via nontangential limits, together with a
Sobolev-type embedding theorem, a Cauchy integral representation, a
direct-sum decomposition of $W_n^p(\R)$ for $1<p<\infty$, and a generalized
Banach algebra structure under pointwise multiplication. We also obtain a
finer Fourier-analytic description in the Hilbert case $p=2$ by proving a
Paley--Wiener theorem and deriving the reproducing kernel of $H_n^2(\C^+)$.
On the operator-theoretic side, we prove the spectral formula for
multiplication operators and establish two verifiable sufficient conditions
for the boundedness of weighted composition operators. These results provide
a systematic theory of Hardy--Sobolev spaces on the upper half-plane beyond
the Hilbert setting.
\end{abstract}

		
        
		\begin{keyword} 
			Hardy-Sobolev spaces \sep upper half-plane \sep  boundary values \sep Paley-Wiener theorem \sep reproducing kernels \sep spectrum of multiplication operators \sep weighted composition operators
		\end{keyword}
	\end{frontmatter}

	\tableofcontents

	
	\section{Introduction}

Hardy--Sobolev spaces are natural analytic function spaces that combine the
boundary control of Hardy spaces with the additional regularity encoded by
Sobolev norms. They occupy an important position in complex analysis,
harmonic analysis, and operator theory. On bounded domains such as the unit
ball $\mathbb{B}^n$ $(n\ge 1)$, the theory of Hardy--Sobolev spaces is by now
well developed. For the spaces $H_s^p(\mathbb{B}^n)$, Carleson measures were
characterized in~\cite{a10,a11}, and a variety of operator-theoretic problems
have been studied, including Toeplitz and Hankel operators, Fredholmness,
essential spectra, multipliers, composition operators, and related integral
operators; see, for instance,
\cite{a12,a13,a14,a15,a7,a18,a19,a16,a17} and the references therein.

By contrast, the Hardy--Sobolev theory on unbounded domains remains much less
developed, even in one complex variable. A notable feature of the available
literature is that many results are concentrated in the Hilbert case $p=2$,
where reproducing kernel Hilbert space techniques and Fourier--Laplace methods
provide powerful structural tools. Outside the Hilbert setting, however,
reproducing kernels are no longer available as a general mechanism, and it is
far from clear whether the existing theory can be extended to the full Banach
range $1\le p\le\infty$. This leads to the central problem of the present
paper: how to develop a systematic Hardy--Sobolev theory on the upper
half-plane beyond the Hilbert framework, and what can replace reproducing
kernels in that setting.

In this paper we study Hardy--Sobolev spaces on the upper half-plane
\[
\C^+=\{z\in\C: Im(z)>0\}.
\]
Let $H(\C^+)$ denote the space of holomorphic functions on $\C^+$, and let
$H^p(\C^+)$ be the classical Hardy space on $\C^+$ for $1\le p\le\infty$.
For $n\in\N$ and $1\le p\le\infty$, we define
\[
H_n^p(\C^+)=\Bigl\{F\in H(\C^+):F^{(k)}\in H^p(\C^+),\ k=0,1,\dots,n\Bigr\},
\]
equipped with the norm
\[
\|F\|_{H_n^p}=
\begin{cases}
\Bigl(\sum_{k=0}^n\|F^{(k)}\|_{H^p}^p\Bigr)^{1/p}, & 1\le p<\infty,\\[6pt]
\sum_{k=0}^n\|F^{(k)}\|_{H^\infty}, & p=\infty.
\end{cases}
\]
These spaces provide a natural half-plane analogue of classical
Hardy--Sobolev spaces on bounded domains, while also reflecting the special
interaction among holomorphic boundary values on $\mathbb R$, derivative
regularity, and Fourier analysis in the upper half-plane.

Several earlier works already indicate the richness of the half-plane
Hardy--Sobolev setting. Early appearances of related Sobolev--Hardy
structures may be found in the study of rational wavelets in~\cite{a21}.
Dang--Qian--You~\cite{a2} and Dang--Qian--Yang~\cite{a22} studied
Hardy--Sobolev derivatives and obtained, in particular, the direct-sum
decomposition
\[
W_1^2(\mathbb R)=H_1^2(\C^+)\oplus H_1^2(\C^-)
\]
in the sense of isometric isomorphism. In a more general Hilbert-space
Laplace-transform framework, Kucik~\cite{a1} introduced the spaces
$A_{(m)}^2$ and proved spectral inclusion results for multiplication
operators. Another Hilbert Hardy--Sobolev space on the right half-plane,
\[
\mathscr{H}_n^2(\C_+)=\Bigl\{F\in H(\C_+):z^kF^{(k)}\in H^2(\C_+),\
k=0,1,\dots,n\Bigr\},
\]
was investigated in~\cite{5,4}, where Paley--Wiener type theorems,
reproducing kernels, and boundedness criteria for weighted composition
operators were established. These works show that half-plane Hardy--Sobolev
spaces form a natural and fruitful framework, but they also indicate that
much of the existing theory is still essentially Hilbertian.

The main purpose of this paper is to develop a systematic theory of
$H_n^p(\C^+)$ in the full Banach range $1\le p\le\infty$, from both
function-theoretic and operator-theoretic viewpoints. Our basic principle is
that, beyond the Hilbert case, the structural role played by reproducing
kernels should be replaced by boundary theory. More precisely, we show that
nontangential boundary values provide an isometric identification of
$H_n^p(\C^+)$ with a natural Hardy--Sobolev space on $\mathbb R$, denoted by
$H_n^p(\mathbb R)$. This boundary model is the starting point of the paper
and serves as the main mechanism from which the subsequent function-theoretic
and operator-theoretic results are derived.

Our first main contribution is the Banach-range structure theory of
$H_n^p(\C^+)$. In Section~3 we prove an isometric boundary characterization
of $H_n^p(\C^+)$ via nontangential limits, together with a Sobolev-type
embedding estimate and a Cauchy integral representation. For
$1<p<\infty$, we further obtain a direct-sum decomposition of
$W_n^p(\mathbb R)$ into upper and lower half-plane Hardy--Sobolev parts,
thereby extending the known Hilbert-space decomposition to the full
reflexive Banach range. We also show that, for $n\ge1$, the space
$H_n^p(\C^+)$ admits a generalized Banach algebra structure under pointwise
multiplication. This phenomenon has no counterpart in the classical Hardy
space case $n=0$ and plays a crucial bridging role in the operator-theoretic
part of the paper.

Our second main contribution concerns the special Hilbert case $p=2$. In
Section~4 we prove a Paley--Wiener theorem for $H_n^2(\C^+)$ and derive an
explicit reproducing kernel. These results provide a finer Fourier-analytic
description of the Hilbertian Hardy--Sobolev space and complement the
boundary-based Banach theory developed earlier.

Our third main contribution lies in operator theory. In Section~5 we study
multiplication operators and weighted composition operators on
$H_n^p(\C^+)$. For multipliers $\psi\in\mathscr{M}_{n,p}$, we prove the sharp
spectral formula
\[
\sigma(T_\psi)=\overline{\psi(\C^+)},
\]
which extends the known spectral inclusion in the Hilbert setting to an exact
spectral description in our Banach framework. As an immediate consequence,
the only compact multiplication operator on $H_n^p(\C^+)$ is the zero
operator. We also establish two verifiable sufficient conditions for the
boundedness of weighted composition operators on $H_n^p(\C^+)$.

We emphasize that the novelty of the paper lies not only in the statements of
these results, but also in the method. In the non-Hilbert setting, our
arguments do not rely on reproducing kernel techniques; instead, they are
based on boundary behavior, weak derivative structure, Sobolev embedding, and
Cauchy-type representations. This provides a unified framework for treating
Hardy--Sobolev spaces on the upper half-plane throughout the full Banach
range.

The principal results of the paper are Theorems~\ref{t1}, \ref{t3}, and
\ref{t4} in Section~3, Theorems~\ref{t2} and \ref{t5} in Section~4, and
Theorems~\ref{t7}, \ref{t8}, and \ref{t9} in Section~5.

The paper is organized as follows. Section~2 collects preliminaries on
Hardy spaces, Sobolev spaces, and weighted Lebesgue spaces. Section~3
develops the Banach-range theory of $H_n^p(\C^+)$, including the boundary
characterization, Sobolev-type embedding, Cauchy integral representation,
direct-sum decomposition, and generalized Banach algebra structure.
Section~4 is devoted to the Hilbert case $p=2$, where we establish the
Paley--Wiener theorem and derive the reproducing kernel of $H_n^2(\C^+)$.
Finally, Section~5 treats multiplication operators and weighted composition
operators on $H_n^p(\C^+)$.


    \section{Preliminaries}

In this section we collect the notation and standard facts that will be used
throughout the paper. We recall basic properties of Hardy spaces on the
upper half-plane, Sobolev spaces on the real line, and certain weighted
Lebesgue spaces on $\R^+$ that arise naturally in the Fourier-analytic
description of $H_n^2(\C^+)$.

\subsection{Hardy Spaces on the Half-Plane}

For $0<p\le\infty$, the Hardy space $H^p(\C^+)$ consists of all holomorphic
functions $F$ on the upper half-plane such that
\[
	\|F\|_{H^p}
	=
	\begin{cases}
		\displaystyle
		\sup_{y>0}\left(\int_{-\infty}^{\infty}|F(x+iy)|^p\,dx\right)^{1/p}<\infty,
		& 0<p<\infty,\\[12pt]
		\displaystyle
		\sup_{z\in\C^+}|F(z)|<\infty,
		& p=\infty.
	\end{cases}
\]

For $1\le p<\infty$, every function $F\in H^p(\C^+)$ admits nontangential
boundary values $F_l$ almost everywhere on $\R$. The boundary function
belongs to the Hardy boundary space
\[
	\begin{split}
		H^p(\R)
		:&=
		\{\,f\in L^p(\R): f \text{ is the nontangential boundary value of }
		F\in H^p(\C^+)\,\} \\
		&=
		\left\{\,f\in L^p(\R):
		\int_{-\infty}^{\infty}\frac{f(x)}{x-\overline z}\,dx=0,\ \forall z\in\C^+
		\right\}.
	\end{split}
\]
If $p=\infty$, we write
\[
	H^\infty(\R)
	=
	\left\{\,f\in L^\infty(\R):
	\int_{-\infty}^{\infty}\frac{f(x)}{(x-\overline z)(x+i)}\,dx=0,\
	\forall z\in\C^+
	\right\},
\]
and $F$ is uniquely determined by $F_l$. Moreover,
\[
	\|F\|_{H^p}=\|F_l\|_{L^p}.
\]

For $1\le p<\infty$, each $F\in H^p(\C^+)$ admits the Cauchy integral
representation
\begin{equation}
	\label{e8}
	F(z)=\frac{1}{2\pi i}\int_{-\infty}^{\infty}\frac{F_l(x)}{x-z}\,dx,
	\qquad z\in\C^+.
\end{equation}
Consequently,
\[
	F(x+iy)
	=
	\frac{1}{\pi}\int_{-\infty}^{\infty}
	F_l(t)\,\frac{y}{(x-t)^2+y^2}\,dt.
\]

For $f\in L^p(\R)$ with $1<p<\infty$, define
\[
	F_+(z)=\frac{1}{2\pi i}\int_{-\infty}^{\infty}\frac{f(x)}{x-z}\,dx,
	\qquad z\in\C^+,
\]
and
\[
	F_-(z)=-\frac{1}{2\pi i}\int_{-\infty}^{\infty}\frac{f(x)}{x-z}\,dx,
	\qquad z\in\C^-.
\]
Their nontangential boundary values are
\[
	\frac12 f+\frac{i}{2}Hf
	\qquad\text{and}\qquad
	\frac12 f-\frac{i}{2}Hf,
\]
respectively, where $H$ denotes the Hilbert transform
\[
	Hf(x)=\frac{1}{\pi}\lim_{\varepsilon\to0}
	\int_{|x-t|>\varepsilon}\frac{f(t)}{x-t}\,dt.
\]
These formulas are consequences of the Plemelj theorem; see, for instance,
\cite[Corollary~14.8]{6} or \cite[Theorem~1.2.1]{19}. When $p=1$, the same
boundary relations remain valid, although $F_\pm$ need not belong to
$H^1(\C^\pm)$.

We write $\prescript{+}{}{H^p}(\R)$ and $\prescript{-}{}{H^p}(\R)$ for the
boundary Hardy spaces corresponding to the upper and lower half-planes,
respectively. The preceding boundary formulas imply the classical
decomposition
\begin{equation}
	\label{e4}
	L^p(\R)=\prescript{+}{}{H^p}(\R)\oplus\prescript{-}{}{H^p}(\R),
	\qquad 1<p<\infty,
\end{equation}
where the sum is direct because
\[
	\prescript{+}{}{H^p}(\R)\cap \prescript{-}{}{H^p}(\R)=\{0\}.
\]
Moreover, $\prescript{+}{}{H^2}(\R)$ and $\prescript{-}{}{H^2}(\R)$ are
orthogonal in $L^2(\R)$.

In the Hilbert case $p=2$, the classical Paley--Wiener theorem states that
the holomorphic Fourier transform
\[
	\mathscr{F}(f)(z)=\int_0^\infty f(x)e^{izx}\,dx,
	\qquad z\in\C^+,
\]
defines an isometric isomorphism from $L^2(\R^+,2\pi\,dx)$ onto
$H^2(\C^+)$; see \cite[Theorem~19.2]{3}. We refer to \cite{6,17,19} for
further background on Hardy spaces.

\subsection{Sobolev Spaces on the Real Line}

Let $L^1_{\mathrm{loc}}(\R)$ denote the space of locally integrable
functions on $\R$, and let $C_c^\infty(\R)$ denote the space of test
functions. A function $F\in L^1_{\mathrm{loc}}(\R)$ is said to be weakly
differentiable if there exists a function $G\in L^1_{\mathrm{loc}}(\R)$ such
that
\[
	\int_{-\infty}^{\infty} F(x)\varphi'(x)\,dx
	=
	-\int_{-\infty}^{\infty} G(x)\varphi(x)\,dx
\]
for all $\varphi\in C_c^\infty(\R)$. In this case we write $D(F)=G$ and
call $G$ the weak derivative of $F$.

Define inductively
\[
	W_1^0(\R)
	=
	\{\,F:\R\to\C : F \text{ is weakly differentiable}\,\},
\]
and, for $n\ge2$,
\[
	W_n^0(\R)
	=
	\{\,F\in W_{n-1}^0(\R): D^{(n-1)}F\in W_1^0(\R)\,\}.
\]

The following standard characterization will be used repeatedly; see, for
example, \cite[Theorem~8.2]{11}.

\begin{prop}
	\label{p4}
	Let $n\in\mathbb N^+$. A function $F$ belongs to $W_n^0(\R)$ if and only if
	there exists a finite sequence $\{F_k\}_{k=0}^n\subset L^1_{\mathrm{loc}}(\R)$
	such that $F=F_0$ almost everywhere and
	\[
		F_k(b)-F_k(a)=\int_a^b F_{k+1}(x)\,dx
	\]
	for all $-\infty<a<b<\infty$ and $k=0,\dots,n-1$.
\end{prop}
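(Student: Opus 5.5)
The plan is to reduce everything to the case $n=1$ and then iterate, using two one-dimensional facts: the fundamental lemma of the calculus of variations (a locally integrable function whose integral against every $\varphi'$ vanishes is a.e. constant), and Fubini's theorem for the absolutely continuous primitive $\int_a^x G$.

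\emph{Case $n=1$, necessity.} Let $F\in W_1^0(\R)$ with $D(F)=G\in L^1_{\mathrm{loc}}(\R)$. Set $\widetilde F(x)=\int_0^x G(t)\,dt$, which is continuous. For $\varphi\in C_c^\infty(\R)$, Fubini gives
\[
\int \widetilde F\varphi'\,dx=-\int G\varphi\,dx,
\]
where one splits the integral into $x>0$ and $x<0$ and uses $\int\varphi'=0$ on the tails. Hence $\int (F-\widetilde F)\varphi'\,dx=0$ for all $\varphi$. By the du Bois-Reymond lemma, $F-\widetilde F=c$ a.e. To prove that lemma, fix $\psi_0\in C_c^\infty(\R)$ with $\int\psi_0=1$. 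Any $\psi\in C_c^\infty(\R)$ can be written as $\psi-(\int\psi)\psi_0=\varphi'$ with $\varphi\in C_c^\infty(\R)$. It follows that $\int (F-\widetilde F)\psi=c\int\psi$ with $c=\int (F-\widetilde F)\psi_0$, so $F-\widetilde F=c$ a.e. Now put $F_0=\widetilde F+c$ and $F_1=G$. Then $F=F_0$ a.e. and $F_0(b)-F_0(a)=\int_a^bF_1$ for all $a<b$.

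\emph{Case $n=1$, sufficiency.} Suppose $F=F_0$ a.e. with $F_0(b)-F_0(a)=\int_a^bF_1$. Then $F_0(x)=F_0(0)+\int_0^xF_1$, and the same Fubini computation shows $\int F\varphi'=\int F_0\varphi'=-\int F_1\varphi$. Thus $D(F)=F_1$.

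\emph{General $n$ by induction.} For necessity, let $F\in W_n^0(\R)$. By definition $D^{(k)}F\in W_1^0(\R)$ for $k\le n-1$, with $D(D^{(k)}F)=D^{(k+1)}F$. Apply the $n=1$ case to each $D^{(k)}F$ to get a continuous representative $F_k$ of $D^{(k)}F$ satisfying $F_k(b)-F_k(a)=\int_a^bD^{(k+1)}F$. Since $F_{k+1}=D^{(k+1)}F$ a.e., the integral may be replaced by $\int_a^bF_{k+1}$. For $k=n-1$, set $F_n=D^{(n)}F$, which is only in $L^1_{\mathrm{loc}}(\R)$.

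For sufficiency, the $n=1$ case applied to the pair $(F_k,F_{k+1})$ gives $D(F_k)=F_{k+1}$. Because weak derivatives depend only on a.e. classes, induction yields $D^{(k)}F=F_k$ a.e. and $D^{(k)}F\in W_1^0(\R)$ for $k\le n-1$, so $F\in W_n^0(\R)$.

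The main obstacle is the du Bois-Reymond step, namely correctly constructing the compactly supported primitive $\varphi$ of $\psi-(\int\psi)\psi_0$. I would handle this directly: $\varphi(x)=\int_{-\infty}^x\bigl(\psi-(\int\psi)\psi_0\bigr)$ is smooth and vanishes outside a compact set because the total integral is zero. The remaining steps are routine Fubini bookkeeping.
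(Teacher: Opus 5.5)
Your proof is correct. The du Bois-Reymond lemma together with the Fubini identity $\int \widetilde F\varphi'\,dx=-\int G\varphi\,dx$ for the primitive $\widetilde F(x)=\int_0^x G$, applied successively to each $D^{(k)}F$ with continuous representatives for $k\le n-1$, is exactly the standard argument behind \cite[Theorem~8.2]{11}; the paper does not prove this proposition but only cites that result, so you have supplied the omitted argument, including the converse direction, which is the same Fubini identity.
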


For $1\le p\le\infty$ and $n\in\mathbb N^+$, the Sobolev space
$W_n^p(\R)$ is defined by
\[
	W_n^p(\R)
	=
	\{\,F\in L^p(\R)\cap W_n^0(\R): D^{(k)}F\in L^p(\R),\ k=1,\dots,n\,\},
\]
equipped with the Sobolev norm
\[
	\|F\|_{W_n^p}
	=
	\begin{cases}
		\displaystyle
		\left(\sum_{k=0}^n \|D^{(k)}F\|_{L^p}^p\right)^{1/p},
		& 1\le p<\infty,\\[12pt]
		\displaystyle
		\sum_{k=0}^n \|D^{(k)}F\|_{L^\infty},
		& p=\infty.
	\end{cases}
\]

Proposition~\ref{p4} immediately yields the following criterion.

\begin{cor}
	\label{c1}
	Let $n\in\mathbb N^+$ and $1\le p\le\infty$. A function $F:\R\to\C$
	belongs to $W_n^p(\R)$ if and only if there exists a finite sequence
	$\{F_k\}_{k=0}^n\subset L^p(\R)$ such that $F=F_0$ almost everywhere and
	\[
		F_k(b)-F_k(a)=\int_a^b F_{k+1}(x)\,dx
	\]
	for all $-\infty<a<b<\infty$ and $k=0,\dots,n-1$.
\end{cor}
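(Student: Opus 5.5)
Corollary~\ref{c1} follows by combining Proposition~\ref{p4} with the definition of $W_n^p(\R)$. The only work is to identify the functions $F_k$ in Proposition~\ref{p4} with the weak derivatives $D^{(k)}F$ almost everywhere.

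\emph{Necessity.} Suppose $F\in W_n^p(\R)$. Then $F\in W_n^0(\R)$, so Proposition~\ref{p4} gives $\{F_k\}_{k=0}^n\subset L^1_{\mathrm{loc}}(\R)$ with $F=F_0$ a.e.\ and
\[
F_k(b)-F_k(a)=\int_a^bF_{k+1}(x)\,dx .
\]
We show by induction that $F_k=D^{(k)}F$ a.e. The relation above says that $F_k$ coincides with a locally absolutely continuous function whose classical derivative is $F_{k+1}$ a.e. Integration by parts against $\varphi\in C_c^\infty(\R)$ is valid for absolutely continuous functions on the compact support of $\varphi$, and it gives
\[
\int F_k\varphi'=-\int F_{k+1}\varphi .
\]
Hence $F_{k+1}$ is the weak derivative of $F_k$. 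Weak derivatives are unique a.e., since $\int g\varphi=0$ for all test functions forces $g=0$ a.e. Therefore, if $F_k=D^{(k)}F$ a.e., then $F_{k+1}=D^{(k+1)}F$ a.e. Since $D^{(k)}F\in L^p(\R)$ for $k=0,\dots,n$, we conclude $F_k\in L^p(\R)$ for every $k$.

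\emph{Sufficiency.} Suppose $\{F_k\}\subset L^p(\R)$ satisfies the integral relations. Since $L^p(\R)\subset L^1_{\mathrm{loc}}(\R)$ by H\"older's inequality on bounded intervals, Proposition~\ref{p4} applies and gives $F\in W_n^0(\R)$. The same induction as above shows $D^{(k)}F=F_k$ a.e. Consequently $F\in L^p(\R)$ and $D^{(k)}F\in L^p(\R)$ for $k=1,\dots,n$, that is, $F\in W_n^p(\R)$.

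The only point that needs care is the identification $F_k=D^{(k)}F$: Proposition~\ref{p4} as stated gives only existence of some sequence $\{F_k\}$, not its relation to the weak derivatives. This is settled by the integration-by-parts argument for locally absolutely continuous functions together with uniqueness of weak derivatives. No other obstacle arises.
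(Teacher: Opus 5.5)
Your proof is correct and follows the paper's approach: the paper derives this corollary directly from Proposition~\ref{p4} and the definition of $W_n^p(\R)$, without further argument. You additionally supply the step the paper leaves implicit, namely the identification $F_k=D^{(k)}F$ almost everywhere, via integration by parts for locally absolutely continuous functions and uniqueness of weak derivatives.
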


We refer to \cite{11,15,18} for background on Sobolev spaces and to
\cite{a1,a3} for related weighted Fourier-side constructions.

\subsection{Weighted Lebesgue Spaces on $\R^+$}

For $1\le p<\infty$ and $n\in\mathbb N$, define the measure
\[
	d\mu_{n,p}=2\pi x^{np}\,dx
\]
on $\R^+=(0,\infty)$. We write $L^p(\R^+,d\mu_{n,p})$ for the
corresponding weighted Lebesgue space, with norm
\[
	\|F\|_{\mu_{n,p}}
	=
	\left(\int_0^\infty |F(x)|^p\,2\pi x^{np}\,dx\right)^{1/p}.
\]

Set
\[
	L_n^p(\R^+)
	=
	\bigcap_{k=0}^n L^p(\R^+,d\mu_{k,p}),
\]
equipped with the norm
\[
	\|F\|_{L_n^p}
	=
	\left(\sum_{k=0}^n \|F\|_{\mu_{k,p}}^p\right)^{1/p}.
\]
Equivalently, $L_n^p(\R^+)=L^p(\R^+,dv_{n,p})$ with equivalent norm, where
\[
	dv_{n,p}=2\pi(1+x^p)^n\,dx.
\]

The following elementary equivalence will be used in Section~4.

\begin{prop}
	\label{p3}
	For $1\le p<\infty$ and $n\in\mathbb N$,
	\[
		L_n^p(\R^+)=L^p(\R^+,d\mu_{0,p})\cap L^p(\R^+,d\mu_{n,p}).
	\]
\end{prop}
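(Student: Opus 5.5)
The identity concerns sets of functions, so I will prove two inclusions and, since both sides carry natural norms, also record that the norms are comparable. The inclusion $L_n^p(\R^+)\subset L^p(\R^+,d\mu_{0,p})\cap L^p(\R^+,d\mu_{n,p})$ is immediate from the definition $L_n^p(\R^+)=\bigcap_{k=0}^n L^p(\R^+,d\mu_{k,p})$: the indices $k=0$ and $k=n$ are among those intersected. For $n=0$ both sides reduce to $L^p(\R^+,d\mu_{0,p})$, so I may assume $n\ge1$.

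For the reverse inclusion, I take $F\in L^p(\R^+,d\mu_{0,p})\cap L^p(\R^+,d\mu_{n,p})$ and fix $0\le k\le n$. The key pointwise estimate on $(0,\infty)$ is
\[
x^{kp}\le 1+x^{np}, \qquad x>0,\ 0\le k\le n .
\]
I will split into two cases. If $0<x\le1$, then $x^{kp}\le 1$. If $x>1$, then $x^{kp}\le x^{np}$, because the exponent $p>0$ is fixed and $k\le n$. Multiplying by $2\pi|F(x)|^p$ and integrating gives
\[
\|F\|_{\mu_{k,p}}^p\le \|F\|_{\mu_{0,p}}^p+\|F\|_{\mu_{n,p}}^p<\infty .
\]
Hence $F\in L^p(\R^+,d\mu_{k,p})$ for every $k$, and so $F\in L_n^p(\R^+)$.

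Summing the last estimate over $k=0,\dots,n$ gives
\[
\|F\|_{L_n^p}^p\le (n+1)\bigl(\|F\|_{\mu_{0,p}}^p+\|F\|_{\mu_{n,p}}^p\bigr)\le (n+1)\|F\|_{L_n^p}^p .
\]
Thus the norm $\|F\|_{L_n^p}$ and the norm $\bigl(\|F\|_{\mu_{0,p}}^p+\|F\|_{\mu_{n,p}}^p\bigr)^{1/p}$ are equivalent. The same pointwise bound, together with $(1+x^p)^n\asymp 1+x^{np}$, also justifies the remark that $L_n^p(\R^+)=L^p(\R^+,dv_{n,p})$.

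I do not expect a real obstacle. The only point needing care is the case split at $x=1$ in the elementary inequality, which is where the interpolation between the weights $x^0$ and $x^{np}$ actually happens.
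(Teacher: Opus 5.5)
Your proof is correct. The pointwise bound $x^{kp}\le 1+x^{np}$ on $(0,\infty)$, obtained by splitting at $x=1$, gives both the reverse inclusion and the norm equivalence $\|F\|_{n,p}\le\|F\|_{L_n^p}\le(n+1)^{1/p}\|F\|_{n,p}$ for $n\ge1$. The paper states this proposition as an elementary fact without proof, and your argument is the standard one that fills it in.
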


By Proposition~\ref{p3}, the norm
\[
	\|F\|_{n,p}
	=
	\left(\|F\|_{\mu_{0,p}}^p+\|F\|_{\mu_{n,p}}^p\right)^{1/p}
\]
is equivalent to $\|\cdot\|_{L_n^p}$.

The next embedding will also be used in the Hilbert case.

\begin{prop}
	\label{c3}
	If $n\ge1$, then
	\[
		L_n^p(\R^+)\hookrightarrow L_1^p(\R^+)\hookrightarrow L^1(\R^+).
	\]
\end{prop}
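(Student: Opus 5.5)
The first embedding $L_n^p(\R^+)\hookrightarrow L_1^p(\R^+)$ will follow from Proposition~\ref{p3}; the second, $L_1^p(\R^+)\hookrightarrow L^1(\R^+)$, needs an estimate split at $x=1$. Both inclusions will be shown to be continuous with explicit constants.

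\textbf{First embedding.} Since $n\ge1$, the definition gives
\[
\|F\|_{L_1^p}^p=\|F\|_{\mu_{0,p}}^p+\|F\|_{\mu_{1,p}}^p\le\sum_{k=0}^n\|F\|_{\mu_{k,p}}^p=\|F\|_{L_n^p}^p .
\]
So the inclusion is immediate with constant $1$. Alternatively one can use the pointwise bound $x^p\le 1+x^{np}$ for $x>0$, which is what underlies Proposition~\ref{p3}.

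\textbf{Second embedding.} For $F\in L_1^p(\R^+)$, write
\[
\int_0^\infty|F|\,dx=\int_0^\infty |F(x)|(1+x)\cdot\frac{1}{1+x}\,dx .
\]
If $p=1$, then
\[
\int_0^\infty|F|\le\int_0^\infty|F|(1+x)\,dx=\frac{1}{2\pi}\bigl(\|F\|_{\mu_{0,1}}+\|F\|_{\mu_{1,1}}\bigr)\le C\|F\|_{L_1^1} .
\]
If $1<p<\infty$, apply H\"older with exponents $p$ and $p'$:
\[
\int_0^\infty|F|\le\Bigl(\int_0^\infty|F|^p(1+x)^p\,dx\Bigr)^{1/p}\Bigl(\int_0^\infty(1+x)^{-p'}\,dx\Bigr)^{1/p'} .
\]
The second factor is finite because $p'>1$. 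In the first factor, $(1+x)^p\le 2^{p-1}(1+x^p)$, so it is bounded by $C\bigl(\|F\|_{\mu_{0,p}}^p+\|F\|_{\mu_{1,p}}^p\bigr)^{1/p}=C\|F\|_{L_1^p}$.

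The only point needing care is this H\"older step: the decay weight $(1+x)^{-1}$ must be $L^{p'}$-integrable at infinity. That holds exactly because $p'<\infty$, and the case $p=1$ is handled directly as above. The constant obtained is $(2\pi)^{-1/p}2^{(p-1)/p}(p'-1)^{-1/p'}$, or the analogous constant when $p=1$. Composing the two inclusions gives the chain
\[
L_n^p(\R^+)\hookrightarrow L_1^p(\R^+)\hookrightarrow L^1(\R^+).
\]
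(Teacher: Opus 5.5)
The paper states this proposition in the preliminaries without giving a proof, so there is no argument of the authors to compare against. Your proof is correct and is the natural one.

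The first inclusion follows directly from the definition of $\|\cdot\|_{L_n^p}$ with constant $1$, so Proposition~\ref{p3} is not actually needed there. The second inclusion is the standard H\"older argument: you write $|F|=|F|(1+x)\cdot(1+x)^{-1}$, use $(1+x)^{-1}\in L^{p'}(\R^+)$, and use $(1+x)^p\le 2^{p-1}(1+x^p)$. This gives exactly your constant $(2\pi)^{-1/p}2^{(p-1)/p}(p'-1)^{-1/p'}$. The case $p=1$ is trivial, since $\|F\|_{\mu_{0,1}}=2\pi\int_0^\infty|F|\,dx$. The paper's equivalent weight $dv_{1,p}=2\pi(1+x^p)\,dx$ would lead to the same H\"older step, because $(1+x^p)^{1/p}$ and $1+x$ are comparable.

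There is one small slip in your closing remark. The integrability of $(1+x)^{-p'}$ at infinity holds because $p'>1$, i.e.\ $p<\infty$, not because $p'<\infty$. For example, $p'=1$ would fail even though it is finite. You stated this correctly earlier ("finite because $p'>1$"), so this is only a wording fix.
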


When $p=2$, the space $L_n^2(\R^+)$ is a Hilbert space under the inner
product
\[
	\langle f,g\rangle_{L_n^2}
	=
	\sum_{k=0}^n \int_0^\infty f(x)\overline{g(x)}\,2\pi x^{2k}\,dx.
\]
This Hilbert-space structure will be used in Section~4.

\section{Function Theory of $H_n^p(\C^+)$}

This section contains the principal new function-theoretic results of the
paper in the full Banach range $1\le p\le\infty$. Our starting point is an
isometric boundary characterization of $H_n^p(\C^+)$ via nontangential
limits. Its significance is not merely that it extends the classical Hardy
boundary theory to the Sobolev setting, but that it provides a
boundary-based structural model for $H_n^p(\C^+)$ beyond the Hilbert
framework. From this model we derive several fundamental consequences,
including Sobolev-type embedding estimates, a Cauchy integral
representation, a direct-sum decomposition of $W_n^p(\R)$ for
$1<p<\infty$, and, for $n\ge1$, a generalized Banach algebra structure
under pointwise multiplication. The latter will also serve as a key bridge
to the operator-theoretic part of the paper.

\subsection{Boundary Characterization of $H_n^p(\C^+)$}

For the classical Hardy space $H^p(\C^+)$, nontangential boundary values
provide an isometric identification with the Hardy boundary space
$\prescript{+}{}{H^p}(\R):=H^p(\R)$. The first main result of this section shows that
an analogous statement remains valid at the Hardy--Sobolev level. The point
is that, for $n\ge1$, one must not only recover the boundary values of the
function itself, but also identify the Sobolev structure carried by its
derivatives. This leads naturally to the boundary Hardy--Sobolev space
\[
H_n^p(\R):=\prescript{+}{}{H^p}(\R)\cap W_n^p(\R),
\]
equipped with the Sobolev norm.

\begin{thm}
	\label{t1}
	Let $1\le p\le\infty$ and $n\in\mathbb N$. Then the nontangential boundary
	value map defines an isometric isomorphism from $H_n^p(\C^+)$ onto
	$H_n^p(\R)$.
\end{thm}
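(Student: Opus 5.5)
Let $F\in H_n^p(\C^+)$ and write $F_l$ for the nontangential boundary value of $F$, and $(F^{(k)})_l$ for that of $F^{(k)}\in H^p(\C^+)$, $k=0,\dots,n$. By the classical theory recalled in Section~2, each $(F^{(k)})_l$ lies in $\prescript{+}{}{H^p}(\R)$ and $\|F^{(k)}\|_{H^p}=\|(F^{(k)})_l\|_{L^p}$. The plan is in three steps:
\begin{itemize}
\item[(i)] show that $F_l\in W_n^p(\R)$ with weak derivatives $D^{(k)}F_l=(F^{(k)})_l$;
\item[(ii)] deduce that the boundary map is isometric and injective;
\item[(iii)] prove surjectivity onto $H_n^p(\R)$.
\end{itemize}
Once (i) holds, (ii) is immediate: summing the identities $\|F^{(k)}\|_{H^p}=\|D^{(k)}F_l\|_{L^p}$ gives $\|F\|_{H_n^p}=\|F_l\|_{W_n^p}$ for every $p$, including $p=\infty$. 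Injectivity follows because $F$ is determined by $F_l$, via the Poisson or Cauchy representation for $p<\infty$ and by the stated uniqueness for $p=\infty$.

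For step (i) I would use Corollary~\ref{c1} and verify
\[
(F^{(k)})_l(b)-(F^{(k)})_l(a)=\int_a^b (F^{(k+1)})_l(x)\,dx
\]
for a.e.\ $a<b$, after choosing suitable representatives. For fixed $y>0$, the fundamental theorem of calculus along the horizontal line gives
\[
F^{(k)}(b+iy)-F^{(k)}(a+iy)=\int_a^b F^{(k+1)}(x+iy)\,dx .
\]
Now let $y\to0^+$. The right-hand side converges for $1\le p<\infty$, because $F^{(k+1)}(\cdot+iy)\to (F^{(k+1)})_l$ in $L^p$ and hence in $L^1[a,b]$. For $p=\infty$ one uses dominated convergence, since there is a.e.\ convergence and a uniform bound. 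On the left-hand side we have a.e.\ nontangential convergence at the Lebesgue points $a,b$. The identity therefore holds for a.e.\ pair $(a,b)$. Redefining $(F^{(k)})_l$ on a null set to be the absolutely continuous function $x\mapsto c+\int_0^x (F^{(k+1)})_l$ yields the identity for all $a<b$. The representatives then lie in $L^p$, and Corollary~\ref{c1} applies.

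The main obstacle is step (iii), surjectivity. Given $f\in H_n^p(\R)$, let $F\in H^p(\C^+)$ have boundary value $f$. We must show that $F^{(k)}\in H^p(\C^+)$ and that $(F^{(k)})_l=D^{(k)}f$. By induction it suffices to handle $k=1$: show that if $f\in \prescript{+}{}{H^p}(\R)\cap W_1^p(\R)$ then $Df\in \prescript{+}{}{H^p}(\R)$ and $F'=G$, where $G$ is the $H^p$ extension of $Df$.

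To prove that $Df\in \prescript{+}{}{H^p}(\R)$, I would first test the characterization from Section~2. For $z\in\C^+$ the function $(x-\bar z)^{-1}$ is smooth with derivative in $L^{p'}$, so integrating by parts (justified by cutoffs and the decay of $f$ in $L^p$, or by approximation for $p=1$) gives
\[
\int \frac{Df(x)}{x-\bar z}\,dx=\int \frac{f(x)}{(x-\bar z)^2}\,dx=\frac{d}{d\bar z}\int \frac{f(x)}{x-\bar z}\,dx=0 .
\]
The case $p=\infty$ is handled analogously with the weighted kernel $\bigl((x-\bar z)(x+i)\bigr)^{-1}$.

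To identify $G$ with $F'$, I would differentiate the Poisson representation $F=P_y*f$ in $x$. This gives
\[
F'(x+iy)=\partial_x (P_y*f)=P_y*Df=G(x+iy),
\]
where the middle equality uses the weak derivative against the Schwartz-class-type kernel $P_y(x-\cdot)$, justified by approximating $P_y$ with compactly supported functions. Hence $F'=G\in H^p(\C^+)$. Iterating $n$ times shows $F\in H_n^p(\C^+)$ with boundary value $f$, which completes the isomorphism.
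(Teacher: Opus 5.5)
Your proposal is correct. The forward half follows the paper's argument: the fundamental theorem of calculus on horizontal lines, the limit $y\to0^+$, and then Corollary~\ref{c1}. You carry it out slightly more carefully than the paper does, in two places:
\begin{itemize}
\item The paper appeals to $L^p$ convergence of $F^{(k)}(\cdot+iy)$ even for $p=\infty$. There norm convergence fails in general, and your dominated-convergence argument is what is actually needed.
\item The paper feeds an identity valid only for almost every $a<b$ into Corollary~\ref{c1}, without the null-set redefinition you perform.
\end{itemize}

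For surjectivity you reach the same key identity, $F^{(k)}(x+iy)=(P_y*D^{(k)}f)(x)$, by a different route.
\begin{itemize}
\item The paper starts from the Cauchy integral of $f$ and differentiates $k$ times. It integrates by parts and subtracts the vanishing integral against $(t-\bar z)^{-2}$ supplied by the Hardy condition. It then recognizes the resulting kernel as $\partial_t$ of the Poisson kernel. The case $p=\infty$ is covered only by a remark about a normalized Cauchy integral.
\item You differentiate the Poisson representation $F=P_y*f$ in $x$ and move one derivative onto $f$ via the weak-derivative identity, with a cutoff. You then induct on $k$.
\end{itemize}
Your route is uniform in $1\le p\le\infty$ and keeps the signs transparent. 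The paper's factor $(-1)^{k+1}$ should be $1$; this is harmless for norms but not for the boundary identity. The paper's route, in exchange, handles all orders at once and links directly to the Cauchy representation of Proposition~\ref{t6}.

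One simplification is available. Your separate check that $Df\in\prescript{+}{}{H^p}(\R)$ can be dropped, including the case $p=\infty$, which you only sketch with the weighted kernel. The identity $F'(x+iy)=(P_y*Df)(x)$ together with $\|P_y*Df\|_{L^p}\le\|Df\|_{L^p}$ already gives $F'\in H^p(\C^+)$. Almost-everywhere convergence of Poisson integrals then gives $(F')_l=Df$, so $Df\in\prescript{+}{}{H^p}(\R)$ follows a posteriori.
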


The proof combines the classical Hardy boundary theory with the weak
derivative characterization of Sobolev spaces and repeated integration by
parts. In this way, one can recover the full boundary Sobolev structure of
$H_n^p(\C^+)$ without relying on reproducing-kernel or Hilbert-space
arguments.

\begin{proof}
	The case $n=0$ is exactly the classical boundary theory of Hardy spaces on
	the upper half-plane. Assume now that $n\ge1$ and let
	$F\in H_n^p(\C^+) \subset H^p(\C^+)$. Denote by $F_l$ the nontangential
	boundary value of $F$. Since $F\in H^p(\C^+)$, we have
	\[
		\int_{-\infty}^{\infty}\frac{F_l(x)}{x-\overline z}\,dx=0,
		\qquad z\in\C^+.
	\]
	Moreover, for every $k=0,1,\dots,n-1$,
	\begin{equation}
		\label{e1-new}
		F^{(k)}(b+iy)-F^{(k)}(a+iy)
		=
		\int_a^b F^{(k+1)}(x+iy)\,dx
	\end{equation}
	for all $-\infty<a<b<\infty$.

	Let
	\[
		F_l^{(k)}(x)=\lim_{y\to0^+}F^{(k)}(x+iy),
	\]
	which exists for almost every $x\in\R$ because $F^{(k)}\in H^p(\C^+)$ for
	$k=0,1,\dots,n$. For fixed $a<b$, H\"older's inequality gives
	\[
		\left|
		\int_a^b F^{(k)}(x+iy)\,dx-\int_a^b F_l^{(k)}(x)\,dx
		\right|
		\le
		(b-a)^{1/q}
		\|F^{(k)}(\cdot+iy)-F_l^{(k)}\|_{L^p(\R)},
	\]
	with the usual interpretation when $p=1$. Since
	$F^{(k)}(\cdot+iy)\to F_l^{(k)}$ in $L^p(\R)$ as $y\to0^+$, it follows
	that
	\[
		\int_a^b F^{(k)}(x+iy)\,dx \to \int_a^b F_l^{(k)}(x)\,dx.
	\]
	Passing to the limit in \eqref{e1-new}, we obtain
	\begin{equation}
		\label{e7}
		F_l^{(k)}(b)-F_l^{(k)}(a)=\int_a^b F_l^{(k+1)}(x)\,dx
	\end{equation}
	for almost every $a<b$ and every $k=0,1,\dots,n-1$. By
	Corollary~\ref{c1}, this shows that $F_l\in W_n^p(\R)$. Together with the
	Hardy boundary condition above, we conclude that $F_l\in H_n^p(\R)$.
	Moreover,
	\begin{equation}
		\label{e2-new}
		F_l^{(k)}=D^{(k)}F_l \qquad \text{a.e. on }\R,\quad k=0,1,\dots,n.
	\end{equation}

	Conversely, let $G\in H_n^p(\R)$. For $1\le p<\infty$, define
	\[
		F(z)=\frac{1}{2\pi i}\int_{-\infty}^{\infty}\frac{G(x)}{x-z}\,dx,
		\qquad z\in\C^+.
	\]
	Then $F\in H^p(\C^+)$ and its nontangential boundary value equals $G$
	almost everywhere. For $k=1,\dots,n$, differentiating under the integral
	sign yields
	\[
		F^{(k)}(z)=\frac{k!}{2\pi i}\int_{-\infty}^{\infty}
		\frac{G(x)}{(x-z)^{k+1}}\,dx.
	\]
	Using integration by parts, equivalently the weak derivative structure of
	$G$, we obtain
	\[
		F^{(k)}(z)=\frac{(-1)^{k-1}}{2\pi i}\int_{-\infty}^{\infty}
		\frac{D^{(k-1)}G(x)}{(x-z)^2}\,dx.
	\]
	Since $G\in H_n^p(\R)$, we also have
	\[
		0
		=
		k!\int_{-\infty}^{\infty}\frac{G(x)}{(x-\overline z)^{k+1}}\,dx
		=
		(-1)^{k-1}\int_{-\infty}^{\infty}
		\frac{D^{(k-1)}G(x)}{(x-\overline z)^2}\,dx.
	\]
	Subtracting the two expressions gives
	\[
		F^{(k)}(z)
		=
		\frac{(-1)^k}{2\pi i}
		\int_{-\infty}^{\infty} D^{(k-1)}G(t)
		\left[
		\frac{1}{(t-\overline z)^2}-\frac{1}{(t-z)^2}
		\right]dt.
	\]
	Writing $z=x+iy$, we note that
	\[
		\frac{\partial}{\partial t}\frac{2iy}{(x-t)^2+y^2}
		=
		\frac{1}{(t-\overline z)^2}-\frac{1}{(t-z)^2}.
	\]
	Hence, integrating by parts once more,
	\[
		F^{(k)}(x+iy)
		=
		\frac{(-1)^{k+1}}{\pi}
		\int_{-\infty}^{\infty}
		D^{(k)}G(t)\,\frac{y}{(x-t)^2+y^2}\,dt.
	\]
	The right-hand side is the Poisson integral of $(-1)^{k+1}D^{(k)}G$, so
	$F^{(k)}\in H^p(\C^+)$ for all $k=0,1,\dots,n$, and therefore
	$F\in H_n^p(\C^+)$. Moreover, by the classical boundary isometry for Hardy
	spaces and \eqref{e2-new},
	\[
		\|F^{(k)}\|_{H^p}
		=
		\|F_l^{(k)}\|_{L^p}
		=
		\|D^{(k)}G\|_{L^p},
		\qquad k=0,1,\dots,n.
	\]
	Thus the boundary-value map is an isometry.

	For $p=\infty$, one uses the normalized Cauchy integral
	\[
		F(z)=\frac{1}{2\pi i}\int_{-\infty}^{\infty}
		G(x)\left(\frac{1}{x-z}-\frac{1}{x+i}\right)\,dx,
		\qquad z\in\C^+,
	\]
	and argues similarly. The subtraction of $(x+i)^{-1}$ ensures absolute
	convergence, and the same boundary and derivative arguments show that
	$F\in H_n^\infty(\C^+)$ with boundary value $G$.
\end{proof}

As immediate consequences of Theorem~\ref{t1}, we record several basic
structural properties of $H_n^p(\C^+)$.

\begin{cor}
	\label{c2a}
	Let $1\le p\le\infty$ and $0\le m\le n$. Then
	\[
		H_n^p(\C^+) \hookrightarrow H_m^p(\C^+).
	\]
\end{cor}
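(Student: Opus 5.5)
The embedding is essentially immediate from the definitions, and I would verify it directly at the level of $\C^+$ rather than passing through boundary values. Let $F\in H_n^p(\C^+)$ and $0\le m\le n$. By definition $F^{(k)}\in H^p(\C^+)$ for all $k=0,1,\dots,n$, and in particular for all $k=0,1,\dots,m$. Hence $F\in H_m^p(\C^+)$, so the inclusion map $\iota\colon H_n^p(\C^+)\to H_m^p(\C^+)$ is well defined. It is clearly linear and injective, being the identity on functions.

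Continuity then follows from a norm comparison. For $1\le p<\infty$,
\[
\|F\|_{H_m^p}^p=\sum_{k=0}^m\|F^{(k)}\|_{H^p}^p\le\sum_{k=0}^n\|F^{(k)}\|_{H^p}^p=\|F\|_{H_n^p}^p ,
\]
since the extra terms are nonnegative. For $p=\infty$ the same comparison holds for the sums $\sum_{k}\|F^{(k)}\|_{H^\infty}$. Thus $\|\iota F\|_{H_m^p}\le\|F\|_{H_n^p}$, and the embedding is contractive, with constant $1$.

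Alternatively, and in the spirit of the remark that this is a consequence of Theorem~\ref{t1}, one can phrase the argument on the boundary. Theorem~\ref{t1} identifies $H_n^p(\C^+)$ isometrically with $H_n^p(\R)={}^{+}H^p(\R)\cap W_n^p(\R)$. The trivial embedding $W_n^p(\R)\hookrightarrow W_m^p(\R)$ is a norm comparison of exactly the same kind, using Corollary~\ref{c1} with the truncated sequence $\{F_k\}_{k=0}^m$. Composing with the inverse isometry for index $m$ gives the same conclusion. The two routes agree because the boundary map is the identity on the underlying function $F$.

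There is no real obstacle here. The only point to check is that the inclusion is compatible with the two identifications, and this holds because both use the same nontangential limit $F_l$ together with \eqref{e2-new}.
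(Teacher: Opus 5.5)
Your proof is correct, and your main argument takes a more elementary route than the one the paper has in mind. The paper gives no separate proof: it lists this statement among the immediate consequences of Theorem~\ref{t1}. The intended argument is to pass to the boundary, where $H_n^p(\R)=\prescript{+}{}{H^p}(\R)\cap W_n^p(\R)\subset H_m^p(\R)$ and $\|f\|_{W_m^p}\le\|f\|_{W_n^p}$ by discarding the terms with $m<k\le n$ (with $W_0^p(\R)$ read as $L^p(\R)$ when $m=0$). One then pulls back through the isometries for the indices $n$ and $m$; these are compatible because both use the same nontangential limit together with \eqref{e2-new}. Your primary argument stays on $\C^+$ and uses only the definition of $H_n^p(\C^+)$. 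Membership is inherited because the conditions defining index $m$ are a subset of those for index $n$, and the norm bound is the same truncation of a sum of nonnegative terms. Hence Theorem~\ref{t1} is not actually needed for this corollary. Your route is shorter and gives the contractive constant $1$ uniformly for $1\le p\le\infty$. The boundary route adds only the observation that the embedding sits compatibly inside the boundary model. Your secondary paragraph reproduces that intended route correctly.
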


\begin{cor}
	\label{c2b}
	For every $1\le p\le\infty$ and $n\in\mathbb N$, the space
	$H_n^p(\C^+)$ is a Banach space.
\end{cor}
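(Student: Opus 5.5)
The plan is to transfer completeness from the boundary side using Theorem~\ref{t1}. That theorem identifies $H_n^p(\C^+)$ isometrically with $H_n^p(\R)=\prescript{+}{}{H^p}(\R)\cap W_n^p(\R)$, equipped with the Sobolev norm. An isometric isomorphism maps Cauchy sequences to Cauchy sequences and convergent sequences to convergent sequences. So it suffices to show that $H_n^p(\R)$ is a closed subspace of the Banach space $W_n^p(\R)$.

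\textbf{Completeness of $W_n^p(\R)$.} We use the criterion of Corollary~\ref{c1}. Let $(G_j)$ be Cauchy in $W_n^p(\R)$. Then for each $k=0,\dots,n$ the sequence $(D^{(k)}G_j)$ is Cauchy in $L^p(\R)$, so it converges in $L^p$ to some $F_k$. For fixed $a<b$, H\"older's inequality gives
\[
\int_a^b D^{(k+1)}G_j\,dx\to\int_a^b F_{k+1}\,dx .
\]
Passing to a subsequence converging almost everywhere, the identities
\[
D^{(k)}G_j(b)-D^{(k)}G_j(a)=\int_a^b D^{(k+1)}G_j\,dx
\]
pass to the limit for almost every $a,b$. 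The one delicate point is upgrading this to every $a<b$. For that, replace $F_k$ by the continuous representative
\[
x\mapsto F_k(x_0)+\int_{x_0}^x F_{k+1}\,dt,
\]
which agrees with $F_k$ almost everywhere. Corollary~\ref{c1} then gives $F_0\in W_n^p(\R)$ with $D^{(k)}F_0=F_k$. Since each derivative converges in $L^p$, we get $G_j\to F_0$ in the $W_n^p$ norm.

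\textbf{Closedness of the Hardy condition.} Suppose $G_j\in H_n^p(\R)$ converges to $G$ in $W_n^p(\R)$. Then in particular $G_j\to G$ in $L^p(\R)$. For each fixed $z\in\C^+$ the kernel $(x-\overline z)^{-1}$ lies in $L^q(\R)$ for $1<p\le\infty$. Hence
\[
\int_{-\infty}^{\infty}\frac{G(x)}{x-\overline z}\,dx=\lim_{j}\int_{-\infty}^{\infty}\frac{G_j(x)}{x-\overline z}\,dx=0 .
\]
The kernel fails to be in $L^\infty$ only in the sense of decay, and it is still bounded. So the case $p=1$ works the same way, with $q=\infty$.

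For $p=\infty$ the paper states the condition with the kernel $((x-\overline z)(x+i))^{-1}$. This kernel lies in $L^1(\R)$, so the same limit argument applies. Therefore $G\in \prescript{+}{}{H^p}(\R)$, and $H_n^p(\R)$ is closed.

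Pulling back through the isometry of Theorem~\ref{t1} shows that $H_n^p(\C^+)$ is complete. The only real obstacle is the almost-everywhere versus everywhere issue in the Sobolev limit step, which the continuous representatives resolve.
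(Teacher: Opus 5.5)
Your proof is correct, and for $1\le p<\infty$ it coincides with the paper's proof. Both pass to $H_n^p(\R)$ through Theorem~\ref{t1}, use completeness of $W_n^p(\R)$, and carry the Hardy condition to the limit by H\"older's inequality against $(x-\overline z)^{-1}$.

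You differ from the paper in two places.

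First, you prove completeness of $W_n^p(\R)$ yourself, whereas the paper cites it as standard. Your continuous-representative fix of the a.e.\ issue works, with $x_0$ taken in the full-measure set where a common subsequence converges.

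Second, and more substantively, at $p=\infty$ you stay on the boundary side and use the normalized kernel $((x-\overline z)(x+i))^{-1}\in L^1(\R)$. The paper instead argues directly in $\C^+$. For a Cauchy sequence in $H_n^\infty(\C^+)$, each $F_j^{(k)}$ converges uniformly on $\C^+$ to a bounded holomorphic $F_k$, and Weierstrass' theorem gives $F_k=F_0^{(k)}$.

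Your version is uniform in $p$, but it depends on the $p=\infty$ case of Theorem~\ref{t1}, namely surjectivity and isometry of the boundary map. The paper only sketches that case. In particular, the step ``$F^{(k)}(\cdot+iy)\to F_l^{(k)}$ in $L^p$'' used in its proof is not available at $p=\infty$ in general (for the top derivative). It has to be replaced by dominated convergence on $[a,b]$. The paper's interior argument is elementary and independent of Theorem~\ref{t1}.

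One small slip: $(x-\overline z)^{-1}$ lies in $L^q(\R)$ precisely for $1\le p<\infty$, not for $1<p\le\infty$ as you wrote. It is bounded by $1/\mathrm{Im}\,z$, so it is in $L^\infty$, but it is not integrable. Since you treat $p=1$ and $p=\infty$ separately with the correct kernels, nothing in the argument breaks.
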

\begin{proof}
	For $1\le p<\infty$, by Theorem~\ref{t1} it suffices to prove that
	$H_n^p(\R)$ is complete. Let $\{F_j\}\subset H_n^p(\R)$ be Cauchy. Since
	$W_n^p(\R)$ is complete, there exists $G\in W_n^p(\R)$ such that
	\[
		\lim_{j\to\infty}\|F_j-G\|_{W_n^p}=0.
	\]
	It remains to show that $G\in H_n^p(\R)$. Fix $z\in\C^+$. By H\"older's
	inequality,
	\[
		\left|\int_{-\infty}^{\infty}\frac{G(x)}{x-\overline{z}}\,dx\right|
		=
		\left|\int_{-\infty}^{\infty}\frac{G(x)-F_j(x)}{x-\overline{z}}\,dx\right|
		\le \|G-F_j\|_{L^p}
		\left\|\frac{1}{\,\cdot-\overline{z}\,}\right\|_{L^q(\R)},
	\]
	with the usual interpretation when $p=1$. Letting $j\to\infty$ gives
	\[
		\int_{-\infty}^{\infty}\frac{G(x)}{x-\overline{z}}\,dx=0,
	\]
	so $G\in H_n^p(\R)$.

	For $p=\infty$, let $\{F_j\}$ be Cauchy in $H_n^\infty(\C^+)$. Then for
	each $k=0,1,\dots,n$, the sequence $\{F_j^{(k)}\}$ converges uniformly on
	$\C^+$ to some $F_k\in H^\infty(\C^+)$. Standard arguments show that
	$F_k=F_0^{(k)}$ for all $k$, so $F_0\in H_n^\infty(\C^+)$. Hence
	$H_n^\infty(\C^+)$ is complete.
\end{proof}

\begin{cor}
	\label{c2c}
	Let $F\in H_n^p(\C^+)$ with $n\ge1$. Then, for each integer
	$0\le k\le n-1$, the derivative $F^{(k)}$ extends continuously to
	$\overline{\C^+}$.
\end{cor}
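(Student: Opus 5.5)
Fix $0\le k\le n-1$. By Theorem~\ref{t1}, $G:=F_l\in H_n^p(\R)\subset W_n^p(\R)$ with $D^{(j)}G=F_l^{(j)}$ a.e. for $j\le n$, and the proof of Theorem~\ref{t1} gives $F^{(k)}$ as the Poisson integral of its boundary value $F_l^{(k)}=D^{(k)}G$. The plan is in two steps: show that $D^{(k)}G$ has a bounded, uniformly continuous representative on $\R$, and then use the standard fact that the Poisson integral of such a function extends continuously to $\overline{\C^+}$ with the prescribed boundary values.

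\textbf{Step 1 (a Sobolev embedding on $\R$).} By Corollary~\ref{c1}, $g:=D^{(k)}G\in L^p(\R)$ has a representative $g_k$ with $g_k(b)-g_k(a)=\int_a^b g_{k+1}\,dx$, where $g_{k+1}=D^{(k+1)}G\in L^p(\R)$. Hölder's inequality gives
\[
|g_k(b)-g_k(a)|\le |b-a|^{1/q}\|g_{k+1}\|_{L^p}.
\]
For $1<p\le\infty$ this makes $g_k$ Hölder continuous. For $p=1$, $g_k$ is absolutely continuous and uniformly continuous by absolute continuity of the integral. Next we bound $g_k$. Averaging $g_k(x)=g_k(t)+\int_t^x g_{k+1}$ over $t\in[x,x+1]$ gives
\[
|g_k(x)|\le \|g_k\|_{L^p(x,x+1)}+\|g_{k+1}\|_{L^p}\le \|g_k\|_{L^p}+\|g_{k+1}\|_{L^p}.
\]
Hence $g_k\in C_b(\R)$ and is uniformly continuous. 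This is the main step, and it is where we need $k\le n-1$, because one more derivative in $L^p$ is required.

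\textbf{Step 2 (Poisson extension).} Since $F^{(k)}(x+iy)=(P_y*g_k)(x)$, with $P_y$ the Poisson kernel, the extension is defined by $F^{(k)}(x)=g_k(x)$ on $\R$. For $x_0\in\R$ and $z=x+iy\to x_0$,
\[
|P_y*g_k(x)-g_k(x_0)|\le |P_y*g_k(x)-g_k(x)|+|g_k(x)-g_k(x_0)|.
\]
The second term tends to $0$ by continuity of $g_k$. The first is at most $\sup_x\int P_y(t)\,|g_k(x-t)-g_k(x)|\,dt$. To bound it, split at $|t|<\delta$, where uniform continuity applies, and at $|t|\ge\delta$, where the bound $2\|g_k\|_\infty\int_{|t|\ge\delta}P_y$ tends to $0$ as $y\to0$. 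So the first term tends to $0$ uniformly in $x$, and $F^{(k)}$ extends continuously to $\overline{\C^+}$. Continuity at interior points is automatic.

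For $p=\infty$, the representation in Theorem~\ref{t1} goes through the normalized Cauchy integral rather than directly through the Poisson integral. In that case we use that bounded holomorphic functions on $\C^+$ are Poisson integrals of their boundary values, which makes Step 2 apply verbatim. Alternatively, $F^{(k+1)}\in H^\infty$ already makes $F^{(k)}$ Lipschitz on $\C^+$, hence uniformly continuous, so it extends continuously to the closure directly.
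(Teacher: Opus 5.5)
Your proposal is correct and follows essentially the same route as the paper: you get continuity of the boundary function $F_l^{(k)}$ from the integral identity \eqref{e7} (equivalently Corollary~\ref{c1}), and then use that the Poisson integral of a continuous boundary function extends continuously to $\overline{\C^+}$. Your version also spells out details the paper's one-line proof leaves implicit: the boundedness and uniform continuity of $F_l^{(k)}$, the approximate-identity estimate, and the case $p=\infty$.
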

\begin{proof}
	By \eqref{e7} we have
	\[
		F_l^{(k)}(x)=F_l^{(k)}(a)+\int_a^x F_l^{(k+1)}(t)\,dt,
	\]
	so $F_l^{(k)}$ is continuous for every $0\le k\le n-1$. Therefore
	$F^{(k)}$ is the Poisson integral of a continuous boundary function and
	extends continuously to $\overline{\C^+}$.
\end{proof}

Analogously, one may define Hardy--Sobolev spaces on the lower half-plane,
denoted by $H_n^p(\C^-)$. The corresponding boundary value space is denoted
by $\prescript{-}{}{H_n^p}(\R):=\prescript{-}{}{H^p}(\R)\cap W_n^p(\R)$.
All of the above statements have natural counterparts in this setting, and
we omit the details.

\subsection{Sobolev-Type Embedding and Cauchy Representation}

The next estimate is a decisive consequence of the boundary model
established in Theorem~\ref{t1}. In particular, it shows that once one
passes to the Sobolev level $n\ge1$, the space $H_n^p(\C^+)$ acquires an
$H^\infty$-type control that is absent in the classical Hardy case. This
fact will be crucial in the proof of the generalized Banach algebra
property below.

\begin{prop}
	Let $1\le p\le\infty$ and $n\ge1$. Then
	\[
		H_n^p(\C^+) \hookrightarrow H^\infty(\C^+).
	\]
	More precisely, there exists a constant $0<C\le e^{1/e}$ such that
	\begin{equation}
		\label{e3}
		\|F\|_{H^\infty}\le C\|F\|_{H_n^p},
		\qquad F\in H_n^p(\C^+).
	\end{equation}
\end{prop}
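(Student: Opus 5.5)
The plan is to reduce \eqref{e3} to a one-dimensional Sobolev inequality for the boundary function, using Theorem~\ref{t1}. Let $F\in H_n^p(\C^+)$ with $n\ge1$, and let $G=F_l$ be its nontangential boundary value. By Theorem~\ref{t1}, $G\in H_n^p(\R)\subset W_1^p(\R)$, with $D G=F_l'$ and $\|G\|_{L^p}=\|F\|_{H^p}$, $\|DG\|_{L^p}=\|F'\|_{H^p}$. By Corollary~\ref{c2c}, $G$ is continuous; more precisely, by \eqref{e7} it is locally absolutely continuous. By the Poisson representation of $F$ from Section~2, $|F(x+iy)|\le\sup_{t}|G(t)|$. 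So it suffices to prove
\[
\|G\|_{L^\infty}\le C\bigl(\|G\|_{L^p}^p+\|DG\|_{L^p}^p\bigr)^{1/p}
\]
for $1\le p<\infty$. Since $\|F\|_{H_1^p}\le\|F\|_{H_n^p}$ by Corollary~\ref{c2a}, this gives \eqref{e3}. The case $p=\infty$ is trivial with $C=1$, because $\|F\|_{H^\infty}\le\|F\|_{H_n^\infty}$ directly from the definition of the norm.

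For $1\le p<\infty$, I would work with $\Phi=|G|^p$. Since $G$ is locally absolutely continuous, so is $\Phi$, with $|\Phi'|\le p|G|^{p-1}|DG|$ a.e. (for $p=1$, use $\bigl||G|'\bigr|\le|DG|$). By H\"older with exponents $p/(p-1)$ and $p$,
\[
\int_\R p|G|^{p-1}|DG|\,dx\le p\|G\|_{L^p}^{p-1}\|DG\|_{L^p}<\infty .
\]
Hence $\Phi'\in L^1(\R)$, so $\lim_{x\to\pm\infty}\Phi(x)$ exists. Since $\Phi\in L^1(\R)$, this limit must be $0$. Therefore, for every $x$,
\[
|G(x)|^p=\Phi(x)=-\int_x^\infty\Phi'(t)\,dt\le p\|G\|_{L^p}^{p-1}\|DG\|_{L^p}.
\]

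To obtain the constant, set $a=\|G\|_{L^p}$ and $b=\|DG\|_{L^p}$. Then $a^{p-1}b\le\max(a,b)^p\le a^p+b^p$, so
\[
|G(x)|\le p^{1/p}\bigl(a^p+b^p\bigr)^{1/p}=p^{1/p}\|F\|_{H_1^p}.
\]
The function $p\mapsto p^{1/p}$ on $[1,\infty)$ attains its maximum $e^{1/e}$ at $p=e$. This yields $C=p^{1/p}\le e^{1/e}$, which is the claimed bound.

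The only delicate step is justifying $\Phi(x)\to0$ as $x\to\pm\infty$ and the chain rule for $|G|^p$ when $G$ may vanish. For the chain rule, I would argue via absolute continuity of the composition of the Lipschitz-on-bounded-sets map $w\mapsto|w|^p$ with $G$; for $p=1$ one can instead use $\bigl||G(x)|-|G(y)|\bigr|\le|G(x)-G(y)|$. The decay at infinity then follows from the integrability of $\Phi'$ and of $\Phi$, as described above.
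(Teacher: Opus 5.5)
Your proof is correct and follows the paper's route: pass to the boundary function via Theorem~\ref{t1}, then apply the one-dimensional Sobolev inequality on $\R$. The only difference is that the paper cites that inequality (with $C\le e^{1/e}$) from the literature, whereas you reprove it through $\Phi=|G|^p$, which is the standard textbook argument applied directly to the absolutely continuous representative rather than through density. Your extra step $a^{p-1}b\le a^p+b^p$ usefully confirms that the constant $p^{1/p}\le e^{1/e}$ is valid for the $\ell^p$-type norm used here, and not only for the sum norm $\|u\|_{L^p}+\|u'\|_{L^p}$ customary in the standard references.
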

\begin{proof}
	By Theorem~\ref{t1}, it suffices to prove the corresponding estimate for
	boundary values in $H_n^p(\R)\subset W_n^p(\R)$. Since $n\ge1$, the
	classical one-dimensional Sobolev embedding theorem yields
	\[
		\|f\|_{L^\infty(\R)}\le C\|f\|_{W_n^p(\R)}
	\]
	for all $f\in W_n^p(\R)$, where $0<C\le e^{1/e}$; see the footnote in
	\cite[p.~213]{11}. Applying this to the boundary value
	$f=F_l\in H_n^p(\R)$ and using the isometric identification in
	Theorem~\ref{t1}, we obtain
	\[
		\|F\|_{H^\infty}
		=
		\|F_l\|_{L^\infty}
		\le C\|F_l\|_{W_n^p}
		=
		C\|F\|_{H_n^p}.
	\]
	This proves \eqref{e3}.
\end{proof}

We next turn to a Cauchy integral representation of functions in
$H_n^p(\C^\pm)$. This representation is the natural analytic counterpart of
the boundary characterization in Theorem~\ref{t1}, and it serves as the main
tool in the proof of the direct-sum decomposition in the next subsection.

\begin{prop}[Integral representation]
	\label{t6}
	Let $1\le p<\infty$ and $n\in\mathbb N$.
	\begin{enumerate}[label=\textup{(\alph*)}]
		\item A function $F$ belongs to $H_n^p(\C^\pm)$ if and only if there
		exists a unique $f\in \prescript{\pm}{}{H_n^p}(\R)$ such that
		\[
			F_\pm(z)=\frac{\pm1}{2\pi i}\int_{-\infty}^{\infty}
			\frac{f(x)}{x-z}\,dx, \qquad z\in\C^\pm,
		\]
		and then $\lim_{y\to0^\pm}F_\pm(x+iy)=f(x)$ almost everywhere.

		\item If $f\in W_n^p(\R)$ and $p\neq1$, then the functions $F_\pm$
		defined above belong to $H_n^p(\C^\pm)$ and satisfy
		\begin{equation}
			\label{e9}
			\lim_{y\to0^\pm}F_\pm(x+iy)
			=\frac12 f(x)\pm\frac{i}{2}(Hf)(x)
			\qquad \text{a.e.}
		\end{equation}
		For $p=1$, the limit relation \eqref{e9} still holds, but $F_\pm$ need
		not belong to $H_n^1(\C^\pm)$.
	\end{enumerate}
\end{prop}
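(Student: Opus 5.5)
Part (a) follows almost directly from Theorem~\ref{t1} and the classical Cauchy representation \eqref{e8}. If $F\in H_n^p(\C^+)$, let $f=F_l$. Theorem~\ref{t1} gives $f\in\prescript{+}{}{H_n^p}(\R)$. Since $F\in H^p(\C^+)$ with $1\le p<\infty$, formula \eqref{e8} gives $F(z)=\frac{1}{2\pi i}\int f(x)(x-z)^{-1}dx$. Uniqueness holds because two such $f$ produce the same $F$, hence the same nontangential limit a.e. Conversely, let $f\in\prescript{+}{}{H_n^p}(\R)$. The converse half of the proof of Theorem~\ref{t1} shows that the Cauchy integral of $f$ lies in $H_n^p(\C^+)$ and has boundary value $f$ a.e. The lower half-plane case is identical after the reflection $z\mapsto\overline z$, which accounts for the sign $\pm1$.

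For part (b) with $1<p<\infty$, I would first apply the Plemelj formulas recalled in Section~2 to $f\in L^p(\R)$. They give $F_\pm\in H^p(\C^\pm)$ with boundary values $\frac12 f\pm\frac i2 Hf$, which is \eqref{e9}. It then remains to show $F_\pm^{(k)}\in H^p(\C^\pm)$ for $k\le n$. Differentiating under the integral and integrating by parts $k$ times, using Corollary~\ref{c1} for $f$, I aim for
\[
F_+^{(k)}(z)=\frac{1}{2\pi i}\int_{-\infty}^{\infty}\frac{D^{(k)}f(x)}{x-z}\,dx ,
\]
so that $F_+^{(k)}$ is the Cauchy integral of an $L^p$ function. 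The Plemelj/Riesz theorem for $1<p<\infty$ then puts it in $H^p(\C^+)$. The same argument works for $F_-$. A consequence worth recording is that the boundary value $\frac12 f+\frac i2Hf$ lies in $\prescript{+}{}{H_n^p}(\R)$, since $H$ commutes with $D^{(k)}$.

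The main obstacle is justifying the integration by parts: the boundary terms $D^{(j)}f(x)(x-z)^{-m}$ must vanish as $|x|\to\infty$. I would handle this as follows. For $j\le n-1$, $D^{(j)}f$ is absolutely continuous with derivative in $L^p$, and $D^{(j)}f\in L^p$. Hence $D^{(j)}f$ is bounded (Sobolev embedding as in \eqref{e3}) and tends to $0$ at infinity, by the standard argument that $|g|^p$ has an integrable derivative. Since $(x-z)^{-m}\to0$ as well, the boundary terms vanish. An alternative is to integrate by parts on $[-R,R]$ and choose $R_j\to\infty$ along which the boundary terms vanish.

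For $p=1$, \eqref{e9} still holds because the Plemelj relations remain valid a.e. for $f\in L^1$. To see that $F_\pm$ need not lie in $H_n^1$, I would point to a Schwartz function $f$ with $\int f\neq0$, for instance a Gaussian. Then $F_+(x+iy)\sim -\frac{1}{2\pi i}\bigl(\int f\bigr)/z$ for large $|z|$, so $F_+\notin H^1(\C^+)$. Equivalently, $Hf\notin L^1$ because $\widehat{f}(0)\ne0$.
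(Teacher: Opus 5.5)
Your proposal is correct and follows essentially the same route as the paper. Part (a) is read off from Theorem~\ref{t1}, and you spell out both directions and the uniqueness, which the paper leaves implicit. Part (b) rests, as in the paper, on integrating by parts to write $F_\pm^{(k)}$ as the Cauchy integral of $D^{(k)}f\in L^p(\R)$ and then invoking the classical Hardy-space theory for $1<p<\infty$.

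Your version is somewhat more careful than the paper's. You justify why the boundary terms in the integration by parts vanish. Your formula also correctly carries no factor $(-1)^{k-1}$; the paper's displayed formula has a spurious sign for even $k$. Finally, your Gaussian example with nonzero integral supplies a concrete witness for the paper's assertion that the Hilbert transform does not map $C_c^\infty(\R)$ into $L^1(\R)$.
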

\begin{proof}
	Part (a) is an immediate consequence of the converse direction of
	Theorem~\ref{t1} applied on $\C^\pm$.

	For part (b), let $f\in W_n^p(\R)$ and define
	\[
		F_\pm(z)=\frac{\pm1}{2\pi i}\int_{-\infty}^{\infty}
		\frac{f(x)}{x-z}\,dx,
		\qquad z\in\C^\pm.
	\]
	These functions are holomorphic in the corresponding half-planes. For
	$k=0$, this is exactly the Cauchy integral itself. For $1\le k\le n$,
	differentiating under the integral sign and using integration by parts, or
	equivalently the weak derivative structure of $f$, we obtain
	\[
		F_\pm^{(k)}(z)
		=
		\pm\frac{k!}{2\pi i}\int_{-\infty}^{\infty}\frac{f(x)}{(x-z)^{k+1}}\,dx
		=
		\pm\frac{(-1)^{k-1}}{2\pi i}
		\int_{-\infty}^{\infty}\frac{D^{(k)}f(x)}{x-z}\,dx.
	\]
	Since $D^{(k)}f\in L^p(\R)$ for $k=0,1,\dots,n$, the classical Hardy-space
	theory implies that $F_\pm^{(k)}\in H^p(\C^\pm)$ whenever $p>1$.
	Therefore $F_\pm\in H_n^p(\C^\pm)$ for $p>1$.

	The boundary relation \eqref{e9} follows from the Plemelj theorem. When
	$p=1$, the same boundary formula remains valid, but $F_\pm$ need not
	belong to $H_n^1(\C^\pm)$. Indeed, if this were true for every
	$f\in W_n^1(\R)$, then the Hilbert transform would map every
	$W_n^1(\R)$-function into $L^1(\R)$, hence in particular every
	compactly supported smooth function into $L^1(\R)$, which is false.
\end{proof}

\subsection{Direct-Sum Decomposition of $W_n^p(\R)$}

For the classical Hardy spaces, every function in $L^p(\R)$ with
$1<p<\infty$ admits a decomposition into the sum of an upper half-plane and
a lower half-plane boundary Hardy function. The next result lifts this
classical decomposition to the Sobolev level. In the Hilbert case it
recovers the known orthogonal decomposition, while for general
$1<p<\infty$ it yields the corresponding Banach-space decomposition
throughout the full reflexive range.

From Proposition~\ref{t6}(b) we obtain
\[
	W_n^p(\R)=\prescript{+}{}{H_n^p}(\R)+\prescript{-}{}{H_n^p}(\R).
\]
Since $\prescript{+}{}{H_n^p}(\R)$ and $\prescript{-}{}{H_n^p}(\R)$ are
subspaces of $\prescript{+}{}{H^p}(\R)$ and $\prescript{-}{}{H^p}(\R)$,
respectively, and since
\[
	\prescript{+}{}{H^p}(\R)\cap\prescript{-}{}{H^p}(\R)=\{0\}
\]
for $1<p<\infty$, the above sum is direct.

\begin{thm}
	\label{t3}
	For $1<p<\infty$ and $n\in\mathbb N$,
	\[
		W_n^p(\R)=\prescript{+}{}{H_n^p}(\R)\oplus\prescript{-}{}{H_n^p}(\R).
	\]
	Moreover, when $p=2$, this decomposition is orthogonal with respect to the
	$W_n^2$ inner product.
\end{thm}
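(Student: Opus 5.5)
We split the claim into existence of the decomposition, uniqueness, boundedness of the projections (so that $\oplus$ is a topological direct sum), and orthogonality when $p=2$.

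\emph{Existence.} Let $f\in W_n^p(\R)$ and define $F_\pm$ by the Cauchy integrals of Proposition~\ref{t6}(b). Since $1<p<\infty$, $F_\pm\in H_n^p(\C^\pm)$. By \eqref{e9} their boundary values are
\[
f_\pm=\tfrac12 f\pm\tfrac{i}{2}Hf .
\]
Theorem~\ref{t1}, applied on both half-planes, gives $f_\pm\in\prescript{\pm}{}{H_n^p}(\R)$, and clearly $f=f_++f_-$. This yields $W_n^p(\R)=\prescript{+}{}{H_n^p}(\R)+\prescript{-}{}{H_n^p}(\R)$, and the reverse inclusion is trivial.

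\emph{Uniqueness.} The intersection $\prescript{+}{}{H_n^p}(\R)\cap\prescript{-}{}{H_n^p}(\R)$ lies in $\prescript{+}{}{H^p}(\R)\cap\prescript{-}{}{H^p}(\R)=\{0\}$, by \eqref{e4}.

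\emph{Boundedness of the projections.} We must check that $P_\pm f=f_\pm$ are bounded on $W_n^p(\R)$. The formula for $F_\pm^{(k)}$ in the proof of Proposition~\ref{t6} shows that $F_\pm^{(k)}$ is $\pm(-1)^{k-1}$ times the Cauchy integral of $D^{(k)}f$. Its boundary value is therefore $\frac12D^{(k)}f\pm\frac i2 HD^{(k)}f$, up to this sign. By \eqref{e2-new} this boundary value equals $D^{(k)}f_\pm$, so $H$ commutes with $D^{(k)}$ on $W_n^p(\R)$. M.~Riesz's theorem then gives
\[
\|D^{(k)}f_\pm\|_{L^p}\le C_p\|D^{(k)}f\|_{L^p}.
\]
Summing over $k$ shows that $P_\pm$ are bounded projections, so the sum is a topological direct sum. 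By the closed graph theorem, boundedness is in any case automatic once both summands are closed, and closedness follows from Corollary~\ref{c2b}.

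\emph{Orthogonality for $p=2$.} For $g\in\prescript{+}{}{H_n^2}(\R)$ and $h\in\prescript{-}{}{H_n^2}(\R)$,
\[
\langle g,h\rangle_{W_n^2}=\sum_{k=0}^n\langle D^{(k)}g,D^{(k)}h\rangle_{L^2}.
\]
The key step is to show that $D^{(k)}g\in\prescript{+}{}{H^2}(\R)$ and $D^{(k)}h\in\prescript{-}{}{H^2}(\R)$. This follows from Theorem~\ref{t1} and \eqref{e2-new}: $D^{(k)}g$ is the boundary value of $G^{(k)}\in H^2(\C^+)$, and similarly for $h$ on $\C^-$. Since $\prescript{+}{}{H^2}(\R)\perp\prescript{-}{}{H^2}(\R)$ in $L^2$, each summand vanishes.

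The main obstacle is the identification of $D^{(k)}f_\pm$ with the $\pm$-Hardy parts of $D^{(k)}f$. This is exactly what forces the constant-sign bookkeeping in the Cauchy-integral derivative formula. It may be cleaner to bypass it with the Fourier transform: $D^{(k)}$ multiplies $\hat f$ by $(i\xi)^k$, and $P_\pm$ are the projections onto $\{\xi>0\}$ and $\{\xi<0\}$. These operations commute on Schwartz functions, which are dense in $W_n^p(\R)$, and the identity extends by continuity.
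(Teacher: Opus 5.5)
Your argument is correct and follows the paper's route. The splitting comes from the Cauchy integrals of Proposition~\ref{t6}(b), and directness comes from $\prescript{+}{}{H^p}(\R)\cap\prescript{-}{}{H^p}(\R)=\{0\}$. Orthogonality for $p=2$ follows because each $D^{(k)}$ maps $\prescript{\pm}{}{H_n^2}(\R)$ into $\prescript{\pm}{}{H^2}(\R)$. Your extra verification that the projections are bounded, via M.~Riesz or the closed graph theorem, is something the paper leaves implicit.

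On the sign you flagged: differentiation commutes with the Cauchy integral, so
\[
F_\pm^{(k)}(z)=\pm\frac{1}{2\pi i}\int_{-\infty}^{\infty}\frac{D^{(k)}f(x)}{x-z}\,dx
\]
with no factor $(-1)^{k-1}$. With that factor, summing your two boundary identities would force $D^{(k)}f=0$ for even $k$. The correct sign is exactly what gives $HD^{(k)}=D^{(k)}H$, while your norm bound does not depend on the sign at all.
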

\begin{proof}
	Only the orthogonality for $p=2$ requires comment. If
	$F\in\prescript{+}{}{H_n^2}(\R)$ and
	$G\in\prescript{-}{}{H_n^2}(\R)$, then for each $k=0,1,\dots,n$ we have
	$D^{(k)}F\in\prescript{+}{}{H^2}(\R)$ and
	$D^{(k)}G\in\prescript{-}{}{H^2}(\R)$. Since
	$\prescript{+}{}{H^2}(\R)$ and $\prescript{-}{}{H^2}(\R)$ are mutually
	orthogonal in $L^2(\R)$, it follows that
	\[
		\langle F,G\rangle_{W_n^2}
		=\sum_{k=0}^n \langle D^{(k)}F,D^{(k)}G\rangle_{L^2}=0.
	\]
\end{proof}

\begin{re}
	The decomposition in Theorem~\ref{t3} does not extend to the endpoint cases
	$p=1$ and $p=\infty$. For $p=\infty$, nonzero constant functions belong to
	both $\prescript{+}{}{H_n^\infty}(\R)$ and
	$\prescript{-}{}{H_n^\infty}(\R)$, so the intersection is nontrivial. For
	$p=1$, if such a decomposition were valid, then the Cauchy integral of
	every function in $W_n^1(\R)$ would belong to $H_n^1(\C^+)$, contradicting
	Proposition~\ref{t6}(b).
\end{re}

This decomposition has several useful consequences. First, it is closely
related to adaptive Fourier decomposition (AFD); see \cite{19}. Second, it
is noteworthy from the Banach-space point of view, since not every closed
subspace of a Banach space admits a complement. Finally, even when a
subspace is known to be complemented, an explicit description of a
complementary subspace is often highly nontrivial.

\subsection{Generalized Banach Algebra Structure}

We now arrive at one of the main structural consequences of the previous
results. Once $n\ge1$, the Sobolev-type embedding shows that
$H_n^p(\C^+)$ is stable under pointwise multiplication and, in fact,
carries a generalized Banach algebra structure. This is a genuinely new
phenomenon at the Hardy--Sobolev level: it fails in the classical Hardy
space case $n=0$, and it will play a central role in the operator-theoretic
developments of Section~5.

Recall that a Banach space $(X,\|\cdot\|)$ is called a \textbf{Banach algebra}
if it is equipped with a multiplication making it an algebra and satisfying
\[
	\|xy\|\le \|x\|\,\|y\|
\]
for all $x,y\in X$. More generally, if there exists a constant $C>0$ such
that
\[
	\|xy\|\le C\|x\|\,\|y\|
\]
for all $x,y\in X$, then $(X,\|\cdot\|)$ is called a
\textbf{generalized Banach algebra}.

\begin{thm}
	\label{t4}
	Let $n\in\mathbb N^+$ and $1\le p\le\infty$. Then $H_n^p(\C^+)$,
	equipped with pointwise multiplication, is a generalized Banach algebra.
	More precisely,
	\begin{equation}
		\label{e10}
		\|FG\|_{H_n^p}\le
		\begin{cases}
			\displaystyle
			C\left(\frac{2^{p(n+1)}-1}{2^p-1}\right)^{1/p}
			\|F\|_{H_n^p}\|G\|_{H_n^p},
			& 1\le p<\infty,\\[12pt]
			\displaystyle
			(2^{n+1}-1)\|F\|_{H_n^\infty}\|G\|_{H_n^\infty},
			& p=\infty,
		\end{cases}
	\end{equation}
	where $C$ is the constant in the Sobolev embedding inequality
	\eqref{e3}.
\end{thm}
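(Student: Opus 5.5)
The plan is to expand each derivative of $FG$ by the Leibniz rule and to estimate every resulting term by an $H^\infty\times H^p$ H\"older bound. The $H^\infty$ factor is controlled by the Sobolev-type embedding \eqref{e3}, applied not to $F$ itself but to one of its derivatives.

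First, for $F,G\in H_n^p(\C^+)$ the product $FG$ is holomorphic on $\C^+$. For each $0\le k\le n$ we have
\[
(FG)^{(k)}=\sum_{j=0}^k\binom{k}{j}F^{(j)}G^{(k-j)} .
\]
So it suffices to bound each product $F^{(j)}G^{(k-j)}$ in $H^p(\C^+)$, and this will also show $FG\in H_n^p(\C^+)$.

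The key observation is that in every pair $(j,k-j)$ with $k\le n$ at least one index is at most $n-1$. Indeed, $j=n$ forces $k-j=0\le n-1$, because $n\ge1$. By symmetry, assume $j\le n-1$. Then $F^{(j)}$ has derivatives $F^{(j+i)}\in H^p(\C^+)$ for $i=0,\dots,n-j$, so $F^{(j)}\in H_{n-j}^p(\C^+)$ with $n-j\ge1$. Moreover $\|F^{(j)}\|_{H_{n-j}^p}\le\|F\|_{H_n^p}$, since its norm is a sub-sum of the norm of $F$.

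Applying \eqref{e3} to $F^{(j)}$ (with $n-j$ in place of $n$; I will check that the constant $C$ there does not depend on the order, as the footnote bound $C\le e^{1/e}$ suggests) gives $\|F^{(j)}\|_{H^\infty}\le C\|F\|_{H_n^p}$. Then, for every $y>0$,
\[
\|F^{(j)}(\cdot+iy)G^{(k-j)}(\cdot+iy)\|_{L^p}\le\|F^{(j)}\|_{H^\infty}\|G^{(k-j)}\|_{H^p}.
\]
Taking the supremum over $y>0$ yields
\[
\|F^{(j)}G^{(k-j)}\|_{H^p}\le C\|F\|_{H_n^p}\|G\|_{H_n^p}.
\]
If instead $k-j\le n-1$, the roles of $F$ and $G$ are swapped and the same bound holds.

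Summing with the binomial weights, $\sum_{j}\binom{k}{j}=2^k$, we get
\[
\|(FG)^{(k)}\|_{H^p}\le 2^kC\|F\|_{H_n^p}\|G\|_{H_n^p}.
\]
For $1\le p<\infty$ we raise this to the $p$-th power and sum over $k$, using $\sum_{k=0}^n2^{pk}=(2^{p(n+1)}-1)/(2^p-1)$. This gives exactly the first line of \eqref{e10}.

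For $p=\infty$ the embedding is not needed, since $\|F^{(j)}G^{(k-j)}\|_{H^\infty}\le\|F^{(j)}\|_{H^\infty}\|G^{(k-j)}\|_{H^\infty}\le\|F\|_{H_n^\infty}\|G\|_{H_n^\infty}$ directly. Summing $2^k$ over $k=0,\dots,n$ gives the constant $2^{n+1}-1$.

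The only delicate point is the index bookkeeping that guarantees an $H^\infty$ factor in every Leibniz term, together with the uniformity of $C$ across the orders $n-j$. If uniformity fails, I would instead take the maximum of the finitely many constants involved.
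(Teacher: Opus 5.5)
Your proposal is correct and follows essentially the same route as the paper: expand by the Leibniz rule, bound each term in $H^p$ by an $H^\infty$ factor times an $H^p$ factor, control the $H^\infty$ factor via \eqref{e3} (interchanging $F$ and $G$ when needed), and sum the binomial weights $2^k$, respectively $2^{pk}$. Your explicit index bookkeeping just makes precise the paper's brief remark about interchanging $F$ and $G$. Your concern about the uniformity of $C$ is harmless: the $W_1^p$ embedding constant already serves for every order $m\ge1$, since $\|f\|_{W_1^p}\le\|f\|_{W_m^p}$.
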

\begin{proof}
	Assume first that $1\le p<\infty$. For $F,G\in H_n^p(\C^+)$ and
	$0\le k\le n$, the Leibniz rule gives
	\[
		(FG)^{(k)}=\sum_{j=0}^k \binom{k}{j}F^{(k-j)}G^{(j)}.
	\]
	Hence
	\[
		\|(FG)^{(k)}\|_{H^p}
		\le \sum_{j=0}^k \binom{k}{j}\|F^{(k-j)}G^{(j)}\|_{H^p}.
	\]
	Using the embedding inequality \eqref{e3}, we obtain
	\[
		\|F^{(k-j)}G^{(j)}\|_{H^p}
		\le \|F^{(k-j)}\|_{H^\infty}\|G^{(j)}\|_{H^p}
		\le C\|F\|_{H_n^p}\|G\|_{H_n^p},
	\]
	and similarly when the roles of $F$ and $G$ are interchanged. Therefore,
	\[
		\|(FG)^{(k)}\|_{H^p}
		\le C\,2^k\,\|F\|_{H_n^p}\|G\|_{H_n^p}.
	\]
	Raising to the $p$th power and summing over $k=0,\dots,n$ yields
	\eqref{e10} for $1\le p<\infty$.

	For $p=\infty$, Leibniz' rule gives
	\[
		\|(FG)^{(k)}\|_{H^\infty}
		\le \sum_{j=0}^k \binom{k}{j}
		\|F^{(k-j)}\|_{H^\infty}\|G^{(j)}\|_{H^\infty}
		\le 2^k \|F\|_{H_n^\infty}\|G\|_{H_n^\infty}.
	\]
	Summing over $k$ gives
	\[
		\|FG\|_{H_n^\infty}
		\le (2^{n+1}-1)\|F\|_{H_n^\infty}\|G\|_{H_n^\infty},
	\]
	which proves \eqref{e10}.
\end{proof}

\begin{re}
	Theorem~\ref{t4} fails completely in the classical case $n=0$ when
	$1\le p<\infty$. Indeed, if $H^p(\C^+)$ were a generalized Banach algebra
	under pointwise multiplication, then $F^2\in H^p(\C^+)$ would hold for
	every $F\in H^p(\C^+)$, which would imply
	$H^p(\C^+)\subseteq H^{2p}(\C^+)$. This is false. For example,
	\[
		F(z)=\left(\frac{1}{\sqrt{z}\,(z+i)}\right)^{1/p}, \qquad z\in\C^+,
	\]
	belongs to $H^p(\C^+)$ but not to $H^{2p}(\C^+)$.
\end{re}
\section{The Hilbert Case $p=2$}

The boundary-based theory developed in Section~3 applies throughout the full
Banach range $1\le p\le\infty$. In the special case $p=2$, however,
$H_n^2(\C^+)$ carries additional Hilbert-space structure, and this allows a
finer Fourier-analytic description than is available in the general Banach
setting. The purpose of this section is to complement the boundary model of
Section~3 by a frequency-side Hilbert model. The two main results are a
Paley--Wiener theorem identifying $H_n^2(\C^+)$ with a weighted
Fourier-side space on $\R^+$, and an explicit formula for the reproducing
kernel of $H_n^2(\C^+)$.

\subsection{A Paley--Wiener Theorem for $H_n^2(\C^+)$}

We begin with the first main refinement specific to the Hilbert case. While
Section~3 identifies $H_n^2(\C^+)$ through boundary values, in the Hilbert
setting one can go further and realize this space isometrically on the
Fourier side. The natural model is the weighted space $L_n^2(\R^+)$, and
the next theorem shows that the holomorphic Fourier transform provides the
precise Paley--Wiener correspondence between these two spaces.

\begin{thm}[Paley--Wiener]
	\label{t2}
	For every $n\in\mathbb N$, the holomorphic Fourier transform
	\[
		\mathscr{F}(f)(z)=\int_0^\infty f(x)e^{izx}\,dx, \qquad z\in\C^+,
	\]
	defines an isometric isomorphism from $L_n^2(\R^+)$ onto
	$H_n^2(\C^+)$.
\end{thm}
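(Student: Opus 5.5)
The plan is to reduce everything to the classical Paley--Wiener theorem quoted in Section~2: $\mathscr F$ is an isometric isomorphism from $L^2(\R^+,2\pi\,dx)=L^2(\R^+,d\mu_{0,2})$ onto $H^2(\C^+)$. The key observation is that differentiation on the holomorphic side corresponds to multiplication by $ix$ on the Fourier side. Indeed, for $f\in L_n^2(\R^+)$ we have
\[
	\frac{d^k}{dz^k}\int_0^\infty f(x)e^{izx}\,dx=\int_0^\infty (ix)^k f(x)e^{izx}\,dx,\qquad 0\le k\le n,
\]
so formally $F^{(k)}=\mathscr F\bigl((ix)^kf\bigr)$. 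This will give the norm identity term by term.

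\textbf{Step 1: the transform is well defined and differentiable.} Let $f\in L_n^2(\R^+)$ and set $F=\mathscr F(f)$. By Proposition~\ref{c3}, $f\in L^1(\R^+)$, so the integral converges absolutely for every $z\in\C^+$. On any set $\{\mathrm{Im}\, z\ge\delta>0\}$ the integrand $x^k|f(x)|e^{-\delta x}$ is dominated by an integrable function, since $x^ke^{-\delta x}$ is bounded. Hence we may differentiate under the integral sign $n$ times, which yields the displayed formula. Since $(ix)^kf\in L^2(\R^+,d\mu_{0,2})$ precisely because $f\in L^2(\R^+,d\mu_{k,2})$, the classical theorem gives $F^{(k)}\in H^2(\C^+)$ with
\[
	\|F^{(k)}\|_{H^2}^2=\|x^kf\|_{\mu_{0,2}}^2=\|f\|_{\mu_{k,2}}^2 .
\]
Summing over $k=0,\dots,n$ gives $\|F\|_{H_n^2}=\|f\|_{L_n^2}$. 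So $\mathscr F$ maps $L_n^2(\R^+)$ isometrically into $H_n^2(\C^+)$, and in particular it is injective.

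\textbf{Step 2: surjectivity.} This is the only step that needs real care. Let $F\in H_n^2(\C^+)$. By the classical theorem there are $g_0,\dots,g_n\in L^2(\R^+,d\mu_{0,2})$ with $F^{(k)}=\mathscr F(g_k)$, and we must show $g_k=(ix)^kg_0$ a.e. I would proceed by induction on $k$, using the fact that $F^{(k+1)}$ is the derivative of $F^{(k)}$.

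Fix $\delta>0$. The function $e^{-\delta x}g_k$ lies in $L^1\cap L^2$, so differentiation under the integral is justified on $\mathrm{Im}\, z>\delta$. This gives
\[
	\frac{d}{dz}\mathscr F(g_k)(z)=\mathscr F\bigl(ix\,g_k\bigr)(z),\qquad \mathrm{Im}\, z>\delta .
\]
Hence $\mathscr F(ixg_k)=\mathscr F(g_{k+1})$ on $\mathrm{Im}\, z>\delta$. Restricting to $z=t+i\delta'$ with $\delta'>\delta$, both sides are ordinary Fourier transforms of the $L^1$ functions $ixg_k(x)e^{-\delta' x}$ and $g_{k+1}(x)e^{-\delta' x}$. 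By uniqueness of the Fourier transform these functions agree a.e., so $g_{k+1}=ixg_k$ a.e.

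By induction, $g_k=(ix)^kg_0$ for every $k\le n$. Then $x^kg_0\in L^2(\R^+,d\mu_{0,2})$ for all $k$, which means $g_0\in L_n^2(\R^+)$, and $F=\mathscr F(g_0)$. This proves that $\mathscr F$ is onto $H_n^2(\C^+)$ and completes the proof.
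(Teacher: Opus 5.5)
Your proof is correct and follows the same route as the paper. Both reduce to the classical Paley--Wiener theorem, differentiate under the integral sign, and sum the identities $\|F^{(k)}\|_{H^2}^2=\|f\|_{\mu_{k,2}}^2$. Your Step~2 is more careful than the paper, which simply asserts that the classical theorem again implies $x^kf\in L^2(\R^+,2\pi\,dx)$; you justify this by identifying the classical preimage of $F^{(k)}$ with $(ix)^kg_0$ via Fourier uniqueness on horizontal lines. One small slip: Proposition~\ref{c3} requires $n\ge1$, but for $n=0$ the statement is the classical theorem itself, and absolute convergence follows from Cauchy--Schwarz anyway.
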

\begin{proof}
	Let $F\in H_n^2(\C^+)$. Since $F\in H^2(\C^+)$, the classical
	Paley--Wiener theorem yields a unique function
	$f\in L^2(\R^+,2\pi\,dx)$ such that
	\[
		F(z)=\int_0^\infty f(x)e^{izx}\,dx, \qquad z\in\C^+.
	\]
	Differentiating under the integral sign, we obtain
	\[
		F^{(k)}(z)=\int_0^\infty (ix)^k f(x)e^{izx}\,dx,
		\qquad k=0,1,\dots,n.
	\]
	Since $F^{(k)}\in H^2(\C^+)$ for each $k$, the classical
	Paley--Wiener theorem again implies that
	$x^k f\in L^2(\R^+,2\pi\,dx)$ for $k=0,1,\dots,n$. Hence
	$f\in L_n^2(\R^+)$.

	Conversely, let $f\in L_n^2(\R^+)$ and define
	\[
		F(z)=\mathscr{F}(f)(z)=\int_0^\infty f(x)e^{izx}\,dx, \qquad z\in\C^+.
	\]
	Then
	\[
		F^{(k)}(z)=\int_0^\infty (ix)^k f(x)e^{izx}\,dx,
		\qquad k=0,1,\dots,n.
	\]
	Since $x^k f\in L^2(\R^+,2\pi\,dx)$, the classical Paley--Wiener theorem
	shows that $F^{(k)}\in H^2(\C^+)$ for each $k$, so
	$F\in H_n^2(\C^+)$.

	Finally, for each $k=0,1,\dots,n$, the classical Paley--Wiener identity
	gives
	\[
		\|F^{(k)}\|_{H^2}^2
		=\int_0^\infty |f(x)|^2\,2\pi x^{2k}\,dx.
	\]
	Summing over $k$ yields
	\[
		\|\mathscr{F}(f)\|_{H_n^2}^2=\|f\|_{L_n^2}^2,
	\]
	which proves that $\mathscr{F}$ is an isometric isomorphism.
\end{proof}

The Paley--Wiener theorem gives a convenient frequency-side model for
$H_n^2(\C^+)$. In particular, combined with Proposition~\ref{p3}, it yields
the following simple characterization.

\begin{cor}
	\label{c:pw-char}
	For every $n\in\mathbb N$,
	\[
		H_n^2(\C^+)
		=
		\bigl\{\,F\in H(\C^+): F\in H^2(\C^+)\ \text{and}\ F^{(n)}\in H^2(\C^+)\,\bigr\}.
	\]
\end{cor}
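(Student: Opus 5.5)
The inclusion ``$\subseteq$'' is trivial from the definition of $H_n^2(\C^+)$, so the whole content is the reverse inclusion. Let $F$ be holomorphic on $\C^+$ with $F\in H^2(\C^+)$ and $F^{(n)}\in H^2(\C^+)$. We must show that every intermediate derivative $F^{(k)}$, $0<k<n$, also lies in $H^2(\C^+)$. The plan is to pass to the Fourier side, where the statement reduces to Proposition~\ref{p3}. The case $n=0$ is empty, so assume $n\ge1$.

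\textbf{Step 1: Fourier representations.} Since $F\in H^2(\C^+)$, the classical Paley--Wiener theorem gives a unique $f\in L^2(\R^+,2\pi\,dx)$ with $F(z)=\int_0^\infty f(x)e^{izx}\,dx$. Similarly, since $F^{(n)}\in H^2(\C^+)$, there is a unique $g\in L^2(\R^+,2\pi\,dx)$ with $F^{(n)}(z)=\int_0^\infty g(x)e^{izx}\,dx$.

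\textbf{Step 2: identify $g=(ix)^nf$.} This is the main obstacle, because we do not yet know that $x^nf$ is integrable enough to differentiate under the integral sign.

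Differentiating under the integral is legitimate for each fixed $z=s+iy$ with $y>0$. Indeed, $|x^kf(x)e^{izx}|\le x^k e^{-yx}|f(x)|$, which is integrable on $\R^+$ by Cauchy--Schwarz, uniformly for $y$ bounded below. Hence
\[
F^{(n)}(z)=\int_0^\infty (ix)^nf(x)e^{izx}\,dx ,\qquad z\in\C^+ .
\]
Thus the two functions $h_1=g$ and $h_2=(ix)^nf$ have the same Laplace-type transform on $\C^+$.

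Now fix $y>0$. Both $h_1(x)e^{-yx}$ and $h_2(x)e^{-yx}$ lie in $L^1\cap L^2(\R^+)$, the second because $x^ne^{-yx}$ is bounded. Their Fourier transforms agree as functions of $s\in\R$. By injectivity of the Fourier transform on $L^1$, they agree almost everywhere, so $g=(ix)^nf$ a.e.

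\textbf{Step 3: conclude.} From Step 2, $f\in L^2(\R^+,d\mu_{0,2})\cap L^2(\R^+,d\mu_{n,2})$. By Proposition~\ref{p3} this intersection equals $L_n^2(\R^+)$. Theorem~\ref{t2} then gives $F=\mathscr{F}(f)\in H_n^2(\C^+)$.

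As a byproduct, the equivalence of $\|\cdot\|_{n,2}$ with $\|\cdot\|_{L_n^2}$ shows that $\bigl(\|F\|_{H^2}^2+\|F^{(n)}\|_{H^2}^2\bigr)^{1/2}$ is an equivalent norm on $H_n^2(\C^+)$.
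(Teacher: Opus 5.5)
Your proof is correct and follows essentially the same route as the paper: you represent $F$ via the classical Paley--Wiener theorem, use Proposition~\ref{p3} to reduce membership in $L_n^2(\R^+)$ to the two extreme weights, and conclude with Theorem~\ref{t2}. Your Step~2 identifies the Paley--Wiener symbol of $F^{(n)}$ as $(ix)^n f$, using injectivity of the Fourier transform on $L^1$ after multiplying by $e^{-yx}$; this makes explicit a uniqueness step that the paper leaves implicit when it asserts that $x^n f\in L^2(\R^+,2\pi\,dx)$ is equivalent to $F^{(n)}\in H^2(\C^+)$.
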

\begin{proof}
	By Theorem~\ref{t2}, a function $F$ belongs to $H_n^2(\C^+)$ if and only
	if $F=\mathscr{F}(f)$ for some $f\in L_n^2(\R^+)$. By
	Proposition~\ref{p3}, this is equivalent to requiring that both
	$f\in L^2(\R^+,2\pi\,dx)$ and $x^n f\in L^2(\R^+,2\pi\,dx)$. By the
	classical Paley--Wiener theorem, these conditions are equivalent to
	$F\in H^2(\C^+)$ and $F^{(n)}\in H^2(\C^+)$, respectively.
\end{proof}

\begin{re}
	Corollary~\ref{c:pw-char} shows that, in the Hilbert case, the full
	Hardy--Sobolev norm can be recovered from the two extreme conditions
	$F\in H^2(\C^+)$ and $F^{(n)}\in H^2(\C^+)$. This is a particularly simple
	consequence of the Fourier-side model provided by Theorem~\ref{t2}.
\end{re}

\subsection{Reproducing Kernel of $H_n^2(\C^+)$}

The Paley--Wiener theorem above provides an explicit Hilbert-space model for
$H_n^2(\C^+)$. Our second main result in this section is that this model
also leads to an explicit reproducing kernel formula. This gives a concrete
realization of the Hilbertian structure of $H_n^2(\C^+)$ that has no
analogue in the general Banach setting.

Recall that a Hilbert space of functions on a set $E$ is called a
reproducing kernel Hilbert space if point evaluations are bounded. In that
case, for each $z\in E$, there exists a unique kernel function $K_z$ such
that
\[
	F(z)=\langle F,K_z\rangle
\]
for all $F$ in the space. Since $H_n^2(\C^+)\hookrightarrow H^\infty(\C^+)$
by the Sobolev-type embedding established in Section~3, point evaluations
are bounded on $H_n^2(\C^+)$, and hence $H_n^2(\C^+)$ is a reproducing
kernel Hilbert space.

\begin{thm}
	\label{t5}
	The reproducing kernel of $H_n^2(\C^+)$ is given by
	\[
		K_n(z,w)=\frac{1}{2\pi}\int_0^\infty
		\frac{1-x^2}{1-x^{2n+2}}\,e^{ix(w-\overline z)}\,dx,
		\qquad z,w\in\C^+.
	\]
\end{thm}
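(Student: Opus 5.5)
The plan is to transfer the problem to the Fourier side via the Paley--Wiener isometry of Theorem~\ref{t2}. There the inner product of $L_n^2(\R^+)$ is a weighted $L^2$ inner product, so the representer of point evaluation can be read off directly. Since $\mathscr{F}:L_n^2(\R^+)\to H_n^2(\C^+)$ is a surjective isometry, it is unitary. Hence for $F=\mathscr{F}(f)$ and $K_z=\mathscr{F}(g_z)$ we have $\langle F,K_z\rangle_{H_n^2}=\langle f,g_z\rangle_{L_n^2}$. It therefore suffices to find $g_z\in L_n^2(\R^+)$ with $\langle f,g_z\rangle_{L_n^2}=F(z)$ for all $f\in L_n^2(\R^+)$.

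First I write the inner product as a single weighted integral:
\[
\langle f,g\rangle_{L_n^2}=\int_0^\infty f(x)\overline{g(x)}\,2\pi\,\omega_n(x)\,dx,
\qquad
\omega_n(x)=\sum_{k=0}^n x^{2k}=\frac{1-x^{2n+2}}{1-x^2}.
\]
The closed form holds for $x\neq1$, and at $x=1$ it is understood as the limit $n+1$. On the other hand,
\[
F(z)=\int_0^\infty f(x)e^{izx}\,dx .
\]
Comparing the two, the natural choice is
\[
\overline{g_z(x)}\,2\pi\,\omega_n(x)=e^{izx},
\qquad\text{that is,}\qquad
g_z(x)=\frac{1}{2\pi}\,\frac{1-x^2}{1-x^{2n+2}}\,e^{-i\overline z x}.
\]
With this choice, $\langle f,g_z\rangle_{L_n^2}=F(z)$ holds by construction, provided $g_z\in L_n^2(\R^+)$.

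Next I check that $g_z\in L_n^2(\R^+)$. Write $z=s+iy$ with $y>0$. Then $|e^{-i\overline z x}|=e^{-yx}$ and $\omega_n(x)\ge1$, so
\[
\|g_z\|_{L_n^2}^2=\int_0^\infty\frac{e^{-2yx}}{2\pi\,\omega_n(x)}\,dx\le\frac{1}{4\pi y}<\infty .
\]
Thus $g_z$ is admissible. The same bound shows $g_z\in L^1(\R^+)$, so $\mathscr{F}(g_z)$ is given by an absolutely convergent integral; alternatively one can use Proposition~\ref{c3}.

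Finally I compute the kernel and state the convention. We have
\[
K_z(w)=\mathscr{F}(g_z)(w)=\int_0^\infty g_z(x)e^{iwx}\,dx=\frac{1}{2\pi}\int_0^\infty\frac{1-x^2}{1-x^{2n+2}}\,e^{ix(w-\overline z)}\,dx,
\]
which is exactly the stated formula, with $K_n(z,w)=K_z(w)$. Uniqueness of the reproducing kernel, which follows from the Riesz representation theorem, identifies it as the kernel of $H_n^2(\C^+)$.

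The only delicate points are minor. One is the removable singularity of $\frac{1-x^2}{1-x^{2n+2}}$ at $x=1$, handled by the limit convention above. The other is justifying that the unitary $\mathscr{F}$ preserves inner products, not just norms; this follows from the polarization identity applied to the isometry of Theorem~\ref{t2}. I expect no substantial obstacle beyond these.
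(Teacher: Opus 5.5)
Your proposal is correct and follows essentially the same route as the paper: write $K_{n,z}=\mathscr{F}(g)$ with $g(x)=\frac{1}{2\pi}\frac{1-x^2}{1-x^{2n+2}}e^{-ix\overline z}$, check that $g\in L_n^2(\R^+)$, and use the Paley--Wiener isometry of Theorem~\ref{t2} so that the weight $\sum_{k=0}^n x^{2k}$ cancels and $\langle f,g\rangle_{L_n^2}=F(z)$. The only differences are minor: you verify membership through the direct bound $\|g_z\|_{L_n^2}^2\le \frac{1}{4\pi y}$, where the paper appeals to Proposition~\ref{p3}, and you spell out the polarization step and the removable singularity at $x=1$, which the paper leaves implicit.
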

\begin{proof}
	Fix $z\in\C^+$ and define
	\[
		K_{n,z}(w)
		=
		\frac{1}{2\pi}\int_0^\infty
		\frac{1-x^2}{1-x^{2n+2}}\,e^{ix(w-\overline z)}\,dx.
	\]
	We may write
	\[
		K_{n,z}(w)=\mathscr{F}(g)(w),
	\]
	where
	\[
		g(x)=\frac{1}{2\pi}\,\frac{(1-x^2)e^{-ix\overline z}}{1-x^{2n+2}}.
	\]

	The function
	\[
		x\mapsto \frac{1-x^2}{1-x^{2n+2}}
		=
		\frac{1}{1+x^2+\cdots+x^{2n}}
	\]
	extends continuously to $(0,\infty)$ and is bounded there. Since
	$e^{-ix\overline z}$ decays exponentially like $e^{-x \, Im(z)}$, there exists
	a constant $M>0$ such that
	\[
		|g(x)|\le M e^{-x\, Im(z)},
		\qquad
		|x^n g(x)|\le M e^{-x\,Im(z)},
		\qquad x>0.
	\]
	Hence $g\in L_n^2(\R^+)$ by Proposition~\ref{p3}, and therefore
	$K_{n,z}\in H_n^2(\C^+)$ by Theorem~\ref{t2}.

	Now let $F\in H_n^2(\C^+)$, and let $f\in L_n^2(\R^+)$ be the unique
	function such that
	\[
		F(w)=\int_0^\infty f(x)e^{iwx}\,dx, \qquad w\in\C^+.
	\]
	Then
	\[
		\begin{aligned}
			\langle F,K_{n,z}\rangle_{H_n^2}
			&=\langle \mathscr F(f),\mathscr F(g)\rangle_{H_n^2}
			=\langle f,g\rangle_{L_n^2} \\
			&=\sum_{k=0}^n \int_0^\infty
			f(x)\overline{g(x)}\,2\pi x^{2k}\,dx \\
			&=\int_0^\infty f(x)\,
			\frac{1-x^2}{1-x^{2n+2}}\,e^{ixz}
			\left(\sum_{k=0}^n x^{2k}\right)\,dx.
		\end{aligned}
	\]
	Since
	\[
		\sum_{k=0}^n x^{2k}=\frac{1-x^{2n+2}}{1-x^2},
	\]
	it follows that
	\[
		\langle F,K_{n,z}\rangle_{H_n^2}
		=\int_0^\infty f(x)e^{ixz}\,dx
		=F(z).
	\]
	Thus $K_n$ is the reproducing kernel of $H_n^2(\C^+)$.
\end{proof}

As useful consequences of the explicit kernel formula, we record a uniform
bound for the kernel norms and a corresponding product estimate.

\begin{cor}
	\label{c4}
	Let $n\in\mathbb N^+$ and $z\in\C^+$. Then:
	\begin{enumerate}[label=\textup{(\alph*)}, nosep, wide=0pt, align=left]
		\item
		\[
			\sup_{z\in\C^+}\|K_{n,z}\|_{H_n^2}^2\le \frac14;
		\]
		\item if $F\in H_n^2(\C^+)$ and $G\in H^2(\C^+)$, then
		\[
			\|FG\|_{H^2}
			\le \frac12\,\|F\|_{H_n^2}\,\|G\|_{H^2}
			\le \frac12\,\|F\|_{H_n^2}\,\|G\|_{H_n^2}.
		\]
	\end{enumerate}
\end{cor}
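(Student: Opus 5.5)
The plan is to compute the kernel norm on the diagonal explicitly via the reproducing property, bound it uniformly, and then turn the resulting pointwise bound into the product estimate. For part (a), fix $z=x_0+iy\in\C^+$. By the reproducing property established in Theorem~\ref{t5},
\[
\|K_{n,z}\|_{H_n^2}^2=\langle K_{n,z},K_{n,z}\rangle_{H_n^2}=K_{n,z}(z)=K_n(z,z).
\]
Substituting $w=z$ in the kernel formula gives $w-\overline z=2iy$. Together with the identity $\frac{1-x^2}{1-x^{2n+2}}=\frac{1}{1+x^2+\cdots+x^{2n}}$ already used in that proof, this yields
\[
\|K_{n,z}\|_{H_n^2}^2=\frac{1}{2\pi}\int_0^\infty \frac{e^{-2xy}}{1+x^2+\cdots+x^{2n}}\,dx .
\]
Since $n\ge1$, the denominator is at least $1+x^2$, and $e^{-2xy}\le1$. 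Hence the integral is at most
\[
\frac{1}{2\pi}\int_0^\infty\frac{dx}{1+x^2}=\frac{1}{2\pi}\cdot\frac{\pi}{2}=\frac14 ,
\]
uniformly in $z$, and taking the supremum gives (a). The only point needing care is the hypothesis $n\ge1$: for $n=0$ the integral diverges as $y\to0^+$, which matches the fact that $H^2(\C^+)$ has unbounded kernels.

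For part (b), I first derive a sharp pointwise bound. By the reproducing property and Cauchy--Schwarz, for every $w\in\C^+$ and $F\in H_n^2(\C^+)$,
\[
|F(w)|=|\langle F,K_{n,w}\rangle_{H_n^2}|\le \|F\|_{H_n^2}\|K_{n,w}\|_{H_n^2}\le \tfrac12\|F\|_{H_n^2}.
\]
Thus $\|F\|_{H^\infty}\le\frac12\|F\|_{H_n^2}$, which is a quantitative version of the embedding \eqref{e3} for $p=2$. Now take $G\in H^2(\C^+)$. Then $FG$ is holomorphic on $\C^+$, and for each $y>0$,
\[
\Bigl(\int_{-\infty}^{\infty}|F(x+iy)G(x+iy)|^2dx\Bigr)^{1/2}\le \|F\|_{H^\infty}\,\|G\|_{H^2}.
\]
Taking the supremum over $y$ gives $\|FG\|_{H^2}\le\frac12\|F\|_{H_n^2}\|G\|_{H^2}$. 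The second inequality is immediate, since $\|G\|_{H^2}\le\|G\|_{H_n^2}$ by the definition of the norm (Corollary~\ref{c2a} with $m=0$).

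I expect no real obstacle. The only step that needs checking is the diagonal evaluation $K_n(z,z)$, specifically the sign of the exponent $ix(w-\overline z)$ at $w=z$. This is confirmed because $K_{n,z}=\mathscr F(g)$, so $K_{n,z}(z)=\langle K_{n,z},K_{n,z}\rangle$ follows directly from the computation in the proof of Theorem~\ref{t5}.
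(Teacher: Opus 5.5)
Your proof is correct and follows essentially the same route as the paper. You compute $\|K_{n,z}\|_{H_n^2}^2=K_n(z,z)$ via the reproducing property, bound the integrand by $\frac{1}{1+x^2}$ to obtain $\frac14$, and then use Cauchy--Schwarz with the reproducing property to get $\|F\|_{H^\infty}\le\frac12\|F\|_{H_n^2}$ and hence the $H^2$ product estimate. The only difference is cosmetic: you state the pointwise bound as a separate step, whereas the paper chains it into a single display.
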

\begin{proof}
	For part (a), by the reproducing property we have
	\[
		\|K_{n,z}\|_{H_n^2}^2=K_n(z,z)
		=
		\frac{1}{2\pi}\int_0^\infty
		\frac{1-x^2}{1-x^{2n+2}}\,e^{-2x \, Im(z)}\,dx.
	\]
	Hence
	\[
		\|K_{n,z}\|_{H_n^2}^2
		\le
		\frac{1}{2\pi}\int_0^\infty
		\frac{1-x^2}{1-x^{2n+2}}\,dx.
	\]
	Since
	\[
		\frac{1-x^2}{1-x^{2n+2}} \le \frac{1}{1+x^2}
		\qquad (x\ge0),
	\]
	it follows that
	\[
		\|K_{n,z}\|_{H_n^2}^2
		\le
		\frac{1}{2\pi}\int_0^\infty \frac{dx}{1+x^2}
		=\frac14.
	\]

	For part (b), we estimate
	\[
		\begin{aligned}
			\|FG\|_{H^2}^2
			&=
			\sup_{y>0}\int_{-\infty}^\infty |F(x+iy)G(x+iy)|^2\,dx \\
			&\le
			\sup_{z\in\C^+}|F(z)|^2\,\|G\|_{H^2}^2 \\
			&=
			\sup_{z\in\C^+}|\langle F,K_{n,z}\rangle_{H_n^2}|^2\,\|G\|_{H^2}^2 \\
			&\le
			\sup_{z\in\C^+}\|K_{n,z}\|_{H_n^2}^2\,\|F\|_{H_n^2}^2\,\|G\|_{H^2}^2 \\
			&\le
			\frac14\,\|F\|_{H_n^2}^2\,\|G\|_{H^2}^2.
		\end{aligned}
	\]
	Taking square roots gives the desired inequality.
\end{proof}

\begin{re}
	The second inequality in Corollary~\ref{c4}(b) is immediate from the
	inclusion $H_n^2(\C^+)\subset H^2(\C^+)$. More generally, if
	$G^{(n)}\in H^2(\C^+)$, the same argument gives
	\[
		\|F\,G^{(n)}\|_{H^2}
		\le \frac12\,\|F\|_{H_n^2}\,\|G^{(n)}\|_{H^2}
		\le \frac12\,\|F\|_{H_n^2}\,\|G\|_{H_n^2},
	\]
	which will be useful in the estimate below.
\end{re}

The kernel estimate also yields a sharper multiplicative bound in the
Hilbert case.

\begin{cor}
	\label{c:sharp-hilbert}
	For every $n\in\mathbb N^+$ and all $F,G\in H_n^2(\C^+)$,
	\[
		\|FG\|_{H_n^2}^2
		\le \frac13\,(4^n-1)\,\|F\|_{H_n^2}^2\,\|G\|_{H_n^2}^2.
	\]
\end{cor}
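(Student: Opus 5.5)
The plan is to repeat the Leibniz-rule argument from the proof of Theorem~\ref{t4}, but to replace the Sobolev constant $C$ by the sharper kernel bound coming from Corollary~\ref{c4}(a). The first step is a uniform pointwise estimate for derivatives. For $0\le m\le n-1$ and $F\in H_n^2(\C^+)$, the function $F^{(m)}$ lies in $H_{n-m}^2(\C^+)$ with $n-m\ge1$, and directly from the definition of the norms,
\[
\|F^{(m)}\|_{H_{n-m}^2}\le\|F\|_{H_n^2}.
\]
Applying the reproducing property in $H_{n-m}^2(\C^+)$ (Theorem~\ref{t5}) together with Corollary~\ref{c4}(a) for the index $n-m$ then gives
\[
\sup_{z\in\C^+}|F^{(m)}(z)|\le \tfrac12\|F^{(m)}\|_{H_{n-m}^2}\le\tfrac12\|F\|_{H_n^2},\qquad 0\le m\le n-1.
\]
The same bound holds for $G$. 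This is the ingredient that produces the factor $\tfrac14$ after squaring, in place of $C^2$.

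Next, for $0\le k\le n$, expand $(FG)^{(k)}=\sum_{j}\binom kj F^{(k-j)}G^{(j)}$. In each term, put the sup norm on whichever factor carries a derivative of order at most $n-1$, and the $H^2$ norm on the other factor. For $k<n$ either choice works. For $k=n$, the endpoint terms $j=0$ and $j=n$ are forced: we use $\sup|G|\,\|F^{(n)}\|_{H^2}$ and $\sup|F|\,\|G^{(n)}\|_{H^2}$, respectively. Write $a_j=\|G^{(j)}\|_{H^2}$ and $b_j=\|F^{(j)}\|_{H^2}$, so that $\sum_j a_j^2=\|G\|_{H_n^2}^2$ and $\sum_j b_j^2=\|F\|_{H_n^2}^2$. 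This yields
\[
\|(FG)^{(k)}\|_{H^2}\le\tfrac12\,\|F\|_{H_n^2}\sum_j\binom kj a_j ,
\]
with the symmetric version used where needed. The crude estimate $a_j\le\|G\|_{H_n^2}$ gives $\|(FG)^{(k)}\|_{H^2}^2\le 4^{k-1}\|F\|^2\|G\|^2$. Summing over $k$ gives the constant $\tfrac1{12}(4^{n+1}-1)$, which exceeds the claimed constant $\tfrac13(4^n-1)$ by exactly $\tfrac14$.

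The main obstacle is recovering this missing $\tfrac14$. I would try two refinements. The first is to keep the $a_j$ (or $b_j$) separate instead of bounding each by the full norm. Then apply Cauchy--Schwarz in the form $\bigl(\sum_j\binom kj a_j\bigr)^2\le 2^k\sum_j\binom kj a_j^2$, and exchange the order of summation in $k$. The resulting weights on $a_j^2$ should be compared with $\tfrac43(4^n-1)$, exploiting that the norm mass $\sum_j a_j^2$ is shared among the $j$'s rather than counted once per $k$. The second is to treat the $k=0$ term separately. By Corollary~\ref{c4}(b) it costs only $\tfrac14\|F\|^2\|G\|_{H^2}^2=\tfrac14\|F\|^2a_0^2$, and this slack should be balanced against the highest-order term. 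In that term the endpoint contributions involve $a_n$ and $b_n$, whose squares also appear in the totals $\|G\|^2$ and $\|F\|^2$. If the bookkeeping closes, summing over $k$ gives $\|FG\|_{H_n^2}^2\le\tfrac13(4^n-1)\|F\|_{H_n^2}^2\|G\|_{H_n^2}^2$. I expect this final constant accounting, rather than any analytic step, to require the most care.
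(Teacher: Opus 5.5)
Your preliminary steps are correct: the kernel bound gives $\sup_{\C^+}|F^{(m)}|\le\frac12\|F\|_{H_n^2}$ for $0\le m\le n-1$. The crude Leibniz estimate gives $\|(FG)^{(k)}\|_{H^2}^2\le 4^{k-1}\|F\|_{H_n^2}^2\|G\|_{H_n^2}^2$. The resulting constant $\frac1{12}(4^{n+1}-1)$ overshoots by exactly $\frac14$.

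\textbf{The gap.} The proposal stops at ``if the bookkeeping closes'', and that bookkeeping is the whole content of the corollary. As written, you have proved only the weaker constant.

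\textbf{How to close it.} Set $X=\|F\|_{H_n^2}$ and $Y=\|G\|_{H_n^2}$. Estimate orders $0$ and $1$ jointly: put the supremum on $F$ in $FG$ and in $FG'$, and on $G$ in $F'G$. Then
\[
\|FG\|_{H^2}^2+\|(FG)'\|_{H^2}^2
\le \tfrac14X^2a_0^2+\tfrac14\bigl(Xa_1+Yb_1\bigr)^2
=\tfrac14\bigl[X^2(a_0^2+a_1^2)+2XYa_1b_1+Y^2b_1^2\bigr]
\le X^2Y^2,
\]
using $a_0^2+a_1^2\le Y^2$, $a_1\le Y$ and $b_1\le X$. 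The crude bound for $2\le k\le n$ then gives $1+\sum_{k=2}^n4^{k-1}=\frac13(4^n-1)$.

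Your second idea, pairing order $0$ with order $n$, also works. Bound $b_n\le X$ and the $2^n-2$ middle terms crudely, then normalize $X=Y=1$. One needs $a_0^2+(a_n+2^n-1)^2\le4^n$, which follows from $a_0^2+a_n^2\le1$ and $a_n\le1$. In either version, the $a_0^2$ from order $0$ must be paired with some $a_k^2$, $k\ge1$, charged to the same factor $X^2$, so the two share one copy of $Y^2$.

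\textbf{Comparison with the paper.} With this completion, your argument is the paper's argument unrolled. The paper inducts on $n$. Its base case $n=1$ says $H_1^2(\C^+)$ is a genuine Banach algebra (constant $1$), which it takes from Kucik. In the step $k\to k+1$, it bounds the new top-order term by exactly your crude estimate, $\bigl[\tfrac12\sum_m\binom{k+1}{m}\bigr]^2=4^k$. So all the sharpness lives in the joint treatment of orders $0$ and $1$, i.e.\ the display above. Your version derives that base case from Corollary~\ref{c4}(a) instead of citing it, which makes the proof self-contained.
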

\begin{proof}
	The case $n=1$ is exactly the result of Kucik~\cite{a1}. Assume the
	statement holds for some $n=k\ge1$. Then for $n=k+1$, the Leibniz rule
	gives
	\[
		\begin{aligned}
			\|FG\|_{H_{k+1}^2}^2
			&=
			\|FG\|_{H_k^2}^2+\|(FG)^{(k+1)}\|_{H^2}^2 \\
			&\le
			\frac13(4^k-1)\|F\|_{H_k^2}^2\|G\|_{H_k^2}^2
			+
			\left[
			\sum_{m=0}^{k+1}\binom{k+1}{m}
			\|F^{(m)}G^{(k+1-m)}\|_{H^2}
			\right]^2.
		\end{aligned}
	\]
	For each $0\le m\le k+1$, we apply Corollary~\ref{c4}(b) to
	$F^{(m)}\in H_{k+1-m}^2(\C^+)$ and $G^{(k+1-m)}\in H^2(\C^+)$. Since
	\[
		\|F^{(m)}\|_{H_{k+1-m}^2}\le \|F\|_{H_{k+1}^2},
		\qquad
		\|G^{(k+1-m)}\|_{H^2}\le \|G\|_{H_{k+1}^2},
	\]
	it follows that
	\[
		\|F^{(m)}G^{(k+1-m)}\|_{H^2}
		\le \frac12\,\|F\|_{H_{k+1}^2}\,\|G\|_{H_{k+1}^2}
	\]
	for all $0\le m\le k+1$. Hence
	\[
		\begin{aligned}
			\|FG\|_{H_{k+1}^2}^2
			&\le
			\frac13(4^k-1)\|F\|_{H_{k+1}^2}^2\|G\|_{H_{k+1}^2}^2
			+
			\left[
			\frac12\sum_{m=0}^{k+1}\binom{k+1}{m}
			\right]^2
			\|F\|_{H_{k+1}^2}^2\|G\|_{H_{k+1}^2}^2 \\
			&=
			\frac13(4^k-1)\|F\|_{H_{k+1}^2}^2\|G\|_{H_{k+1}^2}^2
			+
			4^k\|F\|_{H_{k+1}^2}^2\|G\|_{H_{k+1}^2}^2 \\
			&=
			\frac13(4^{k+1}-1)\|F\|_{H_{k+1}^2}^2\|G\|_{H_{k+1}^2}^2.
		\end{aligned}
	\]
	This completes the induction.
\end{proof}

\subsection{Comparison with $\mathscr{H}_n^2(\C^+)$}

For comparison, we briefly relate $H_n^2(\C^+)$ to the Hilbert-type space
$\mathscr{H}_n^2(\C^+)$ mentioned in the Introduction. Although both spaces
admit Fourier-side descriptions, the corresponding models are substantially
different, and the following examples show that neither space contains the
other.

We now compare $H_n^2(\C^+)$ with the Hilbert-type space
$\mathscr{H}_n^2(\C^+)$ introduced in the Introduction. The Paley--Wiener
description above shows that $H_n^2(\C^+)$ is modeled by the weighted space
$L_n^2(\R^+)$, whereas the corresponding Fourier-side model for
$\mathscr{H}_n^2(\C^+)$ is the space $\mathscr{T}_n^2$ from \cite{a3}.

\begin{cthm}[\cite{a3}]
	Let $n\in\mathbb N^+$. The holomorphic Fourier transform is an isometric
	isomorphism from $\mathscr{T}_n^2$ onto $\mathscr{H}_n^2(\C^+)$, where
	$\mathscr{T}_n^2$ consists of all functions $f:\R^+\to\C$ such that
	$f^{(k)}$ exists for $k=0,1,\dots,n-1$, the derivative $f^{(n-1)}$ is
	absolutely continuous, and each function
	$x\mapsto x^k f^{(k)}(x)$ belongs to $L^2(\R^+)$ for
	$k=0,1,\dots,n$.
\end{cthm}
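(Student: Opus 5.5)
The statement is a cited theorem from \cite{a3}. Still, I would prove it along the same lines as Theorem~\ref{t2}. The plan rests on the classical Paley--Wiener theorem together with the identity
\[
z^k F^{(k)}(z)=\int_0^\infty z^k (ix)^k f(x)e^{izx}\,dx .
\]
This identity should be converted, by repeated integration by parts in $x$, into a Fourier transform of a combination of the functions $x^j f^{(j)}$. The key formal relation is $z\,e^{izx}=-i\,\partial_x e^{izx}$. Hence, for a suitable $g$,
\[
z\int_0^\infty g(x)e^{izx}\,dx=i\int_0^\infty g'(x)e^{izx}\,dx ,
\]
provided the boundary term $g(x)e^{izx}\big|_0^\infty$ vanishes. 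Iterating shows that $z^kF^{(k)}=\mathscr{F}(h_k)$, where $h_k=i^k\cdot i^k\,\partial_x^k\bigl(x^k f\bigr)$. By the Leibniz rule, $h_k$ is a fixed linear combination, with binomial-type coefficients, of the functions $x^j f^{(j)}$ for $0\le j\le k$.

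The forward direction takes $f\in\mathscr{T}_n^2$. Each $x^jf^{(j)}$ lies in $L^2(\R^+)$, so $h_k\in L^2(\R^+)$, and the classical Paley--Wiener theorem gives $z^kF^{(k)}\in H^2(\C^+)$. The converse direction takes $F\in\mathscr{H}_n^2$ and writes $F=\mathscr{F}(f)$ with $f\in L^2$. For each $k$ one has $z^kF^{(k)}=\mathscr{F}(h_k)$ for some $h_k\in L^2$. I would then identify $h_k$ with $(-1)^k\partial_x^k(x^kf)$ in the distributional sense on $(0,\infty)$, by testing against $C_c^\infty(0,\infty)$ and using injectivity of $\mathscr{F}$. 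Since the triangular system can be inverted, each $x^kf^{(k)}$ is in $L^2$. Local regularity of $f$ then follows: $f^{(n-1)}$ is absolutely continuous on compact subsets of $(0,\infty)$, by Proposition~\ref{p4} applied locally.

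The main obstacle is \emph{isometry}. The Paley--Wiener identity gives $\|z^kF^{(k)}\|_{H^2}^2=2\pi\|\partial_x^k(x^kf)\|_{L^2}^2$. This matches the $\mathscr{T}_n^2$ norm only if that norm is defined through these combinations, or if cross terms cancel. I would check whether \cite{a3} defines $\|f\|_{\mathscr{T}_n^2}$ as $\sum_k\|\partial_x^k(x^kf)\|$, which I suspect it does. If instead the norm is $\sum_k\|x^kf^{(k)}\|$, then one only gets an equivalence of norms, via Hardy-type inequalities on $\R^+$.

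A second delicate point is the vanishing of the boundary terms at $x=0$ and $x=\infty$ in the integrations by parts. The weights $x^k$ help at $0$, and the factor $e^{-x\,\mathrm{Im}\,z}$ helps at $\infty$. To justify this I would first establish the identities for $f\in C_c^\infty(0,\infty)$ and then pass to the general case by density.
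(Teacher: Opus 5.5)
Your reduction $z^kF^{(k)}=\mathscr{F}\bigl((-1)^k\partial_x^k(x^kf)\bigr)$ is sound, modulo the density argument you sketch for the boundary terms. So is your inversion of the triangular Leibniz system to get $x^kf^{(k)}\in L^2(\R^+)$. Together these give the set-theoretic identification $\mathscr{F}(\mathscr{T}_n^2)=\mathscr{H}_n^2(\C^+)$. The paper only quotes this result from the literature, so there is no internal proof to compare with.

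The gap is the isometry, which is the actual content of the statement. You leave it contingent on how the norm of $\mathscr{T}_n^2$ is defined. You also assert that for the natural norm $2\pi\sum_{k=0}^n\|x^kf^{(k)}\|_{L^2(\R^+)}^2$ one gets only an equivalence. That assertion is false. The missing ingredient is the exact identity
\[
\bigl\|\partial_x^k(x^kf)\bigr\|_{L^2(\R^+)}=\bigl\|x^kf^{(k)}\bigr\|_{L^2(\R^+)},\qquad k=0,1,\dots,n.
\]
So the cross terms do cancel for every $k$, the two candidate norms coincide, and your guess about how the cited reference defines the norm becomes irrelevant.

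For $k=1$ the identity is a one-line integration by parts: $2\,\mathrm{Re}\int_0^\infty xf'\overline{f}\,dx=\int_0^\infty x\,(|f|^2)'\,dx=-\|f\|_{L^2}^2$, hence $\|f+xf'\|_{L^2}=\|xf'\|_{L^2}$.

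In general, put $\theta=x\,\frac{d}{dx}$. Then $x^k\partial_x^k=\theta(\theta-1)\cdots(\theta-k+1)$ and $\partial_x^kx^k=(\theta+1)\cdots(\theta+k)$. Integration by parts on $C_c^\infty(0,\infty)$ shows that $S=\theta+\tfrac12$ is skew-symmetric. Let $p(s)=\prod_{j=1}^k\bigl(s-(j-\tfrac12)\bigr)$. Then $x^k\partial_x^k=p(S)$, $p(S)^*=p(-S)$, and $\partial_x^kx^k=(-1)^kp(S)^*$. Since $p(S)$ and $p(S)^*$ commute, $\|p(S)f\|^2=\langle p(S)^*p(S)f,f\rangle=\|p(S)^*f\|^2$, which is the identity.

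A cleaner way to see this is the unitary substitution $f(x)=x^{-1/2}g(\log x)$ from $L^2(\R)$ onto $L^2(\R^+)$. It turns $S$ into $d/du$, so $x^k\partial_x^k$ and $\partial_x^kx^k$ become Fourier multipliers $\prod_{j=1}^k\bigl(i\xi\mp(j-\tfrac12)\bigr)$ of equal modulus. The same substitution identifies $\mathscr{T}_n^2$ with $W_n^2(\R)$ up to equivalent norms. That also supplies the density of $C_c^\infty(0,\infty)$ in $\mathscr{T}_n^2$ that your boundary-term argument relies on. No Hardy-type inequality is needed.
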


The first example is the function
\[
	f(x)=
	\begin{cases}
		W(x), & 0<x<1,\\
		0, & x\ge1,
	\end{cases}
\]
where $W$ denotes the everywhere continuous but nowhere differentiable
Weierstrass function~\cite{a23}. Since $x^k f(x)$ is bounded on $(0,1)$ and
vanishes on $[1,\infty)$ for every $k=0,1,\dots,n$, it follows that
$f\in L_n^2(\R^+)$. However, $f$ is nowhere differentiable on $(0,1)$, so
$f\notin\mathscr{T}_n^2$.

Conversely, consider
\[
	g(x)=
	\begin{cases}
		\displaystyle \sum_{m=0}^n (1-x)^m, & 0<x<1,\\[4pt]
		\displaystyle \frac{1}{x}, & x\ge1.
	\end{cases}
\]
Since $g(x)=x^{-1}$ for $x\ge1$, we have $g\notin L^1(\R^+)$, and hence
$g\notin L_n^2(\R^+)$ by Proposition~\ref{c3}. On the other hand,
$g^{(n-1)}$ is absolutely continuous on $(0,\infty)$, and for each
$k=0,1,\dots,n$ the function $x^k g^{(k)}(x)$ is square-integrable on
$\R^+$. Indeed, on $(0,1)$ this is immediate since $g$ is a polynomial,
while on $[1,\infty)$ one has $g^{(k)}(x)=(-1)^k k! \, x^{-k-1}$, so
\[
	x^k g^{(k)}(x)=(-1)^k k!\,x^{-1}\in L^2(1,\infty).
\]
Thus $g\in\mathscr{T}_n^2$.

Hence the spaces $\mathscr{H}_n^2(\C^+)$ and $H_n^2(\C^+)$ intersect, but
neither contains the other.

Finally, we note one further distinction between these two spaces. While
$H_n^2(\C^+)$ is a generalized Banach algebra by Theorem~\ref{t4},
$\mathscr{H}_n^2(\C^+)$ is not.

\begin{prop}
	\label{p:not-gba}
	The space $\mathscr{H}_n^2(\C^+)$ cannot be made into a generalized
	Banach algebra under pointwise multiplication by any equivalent norm.
\end{prop}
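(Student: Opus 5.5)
The plan is to show that $\mathscr{H}_n^2(\C^+)$ is not even closed under pointwise multiplication. This settles the claim for every norm at once. Any generalized Banach algebra structure "under pointwise multiplication" requires $FG\in\mathscr{H}_n^2(\C^+)$ whenever $F,G\in\mathscr{H}_n^2(\C^+)$, and this is a purely set-theoretic property, unaffected by passing to an equivalent norm. So it suffices to exhibit a single $F\in\mathscr{H}_n^2(\C^+)$ with $F^2\notin H^2(\C^+)$; then $F^2\notin\mathscr{H}_n^2(\C^+)$.

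The mechanism is the one behind the Weierstrass/$1/x$ examples above. The weights $z^k$ in the definition of $\mathscr{H}_n^2$ vanish at the boundary point $0$, so derivatives of a function with a homogeneous singularity at $0$ are not penalized. In contrast, for $H_n^2(\C^+)$ the Sobolev embedding forces boundedness. Concretely, fix $a$ with $\tfrac14<a<\tfrac12$ and set
\[
F(z)=z^{-a}(z+i)^{-1},\qquad z\in\C^+,
\]
using the principal branch of $z^{-a}$, which is holomorphic on $\C^+$.

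First, I check $F\in H^2(\C^+)$. On horizontal lines $|F(x+iy)|\le |x+iy|^{-a}|x+iy+i|^{-1}\le |x|^{-a}(1+x^2)^{-1/2}$ up to constants uniform in $y>0$, and $|x|^{-2a}(1+x^2)^{-1}$ is integrable since $2a<1$. Next, I compute $z^kF^{(k)}$ by the Leibniz rule. Each term has the form $c\, z^{k-j}\,\partial^{k-j}(z^{-a})\cdot z^{j}\partial^{j}(z+i)^{-1} = c'\, z^{-a}\cdot z^{j}(z+i)^{-1-j}$. Moreover $|z^j(z+i)^{-j}|\le 1$ on $\C^+$, since $|z|\le|z+i|$ there. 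Hence $|z^kF^{(k)}(z)|\le C_k|z|^{-a}|z+i|^{-1}$, and the same uniform line estimate gives $z^kF^{(k)}\in H^2(\C^+)$ for $k=0,\dots,n$. Thus $F\in\mathscr{H}_n^2(\C^+)$.

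Finally, $F^2(z)=z^{-2a}(z+i)^{-2}$. Its boundary values satisfy $|F^2(x)|^2\asymp |x|^{-4a}$ near $x=0$, and this is not integrable because $4a>1$. By Fatou's lemma (or simply letting $y\to0^+$ in $\int|F^2(x+iy)|^2dx$), $F^2\notin H^2(\C^+)$. The only step needing care is the uniform-in-$y$ control in the $H^2$ estimates, namely the bound $|x+iy|^{-a}\le|x|^{-a}$ and $|x+iy+i|\ge(1+x^2)^{1/2}$ for $y>0$. These are elementary, so I expect no serious obstacle. The one point to double-check is that the definition of $\mathscr{H}_n^2$ used here is indeed on $\C^+$ with weights $z^k$, which is the version in the statement; the same construction works on $\C_+$ with $z^{-a}(z+1)^{-1}$.
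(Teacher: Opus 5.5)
Your proof is correct, but it takes a genuinely different route from the paper. The paper argues indirectly. It imports from \cite{a3} the two-sided estimate $\|K_{n,z}\|_{\mathscr{H}_n^2}\asymp |z|^{-1/2}$ for the reproducing kernel. It combines this with Kucik's result \cite{a1} that a Hilbert function space which is a generalized Banach algebra must have $\sup_z\|K_z\|<\infty$, because point evaluations are characters. The blow-up of $\|K_{n,z}\|$ as $|z|\to0^+$ then gives the contradiction.

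You instead exhibit an explicit $F=z^{-a}(z+i)^{-1}$ with $\tfrac14<a<\tfrac12$, lying in $\mathscr{H}_n^2(\C^+)$, such that $F^2\notin H^2(\C^+)$. Your verifications all check out:
\begin{enumerate}[label=\textup{(\roman*)}, nosep]
\item the Leibniz computation $z^kF^{(k)}=z^{-a}\sum_j c_j z^j(z+i)^{-1-j}$;
\item the inequality $|z|\le|z+i|$ on $\C^+$;
\item the majorant $|x|^{-2a}(1+x^2)^{-1}$, uniform in $y>0$;
\item Fatou's lemma applied to $|F^2(x+iy)|^2\to|x|^{-4a}(1+x^2)^{-2}$.
\end{enumerate}

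Your route is self-contained and needs neither cited result. It also proves a formally stronger statement: the space is not closed under pointwise multiplication, so no norm at all, equivalent or not, can make it an algebra. The paper's proof, as written, tacitly needs one extra remark. An equivalent norm need not be Hilbertian, so one must note that it changes the norms of the point-evaluation functionals only by a bounded factor.

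The paper's approach, for its part, isolates the structural obstruction: point evaluations are unbounded near the boundary point $0$. That argument applies verbatim to any reproducing kernel Hilbert space with unbounded kernel. Your example is a concrete witness of exactly this obstruction, since $F$ itself is unbounded near $0$.

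In this setting the two conclusions are in fact equivalent. If the space were closed under multiplication, then by the closed graph theorem and uniform boundedness one would get $\|FG\|\le C\|F\|\,\|G\|$ automatically.
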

\begin{proof}
	Assume, for contradiction, that $\mathscr{H}_n^2(\C^+)$ were a
	generalized Banach algebra. By \cite[Theorems~4.1 and~4.2]{a3}, it is a
	reproducing kernel Hilbert space whose reproducing kernel $K_n$ satisfies
	the two-sided estimate
	\[
		\frac{1}{(n-1)!\sqrt{2n-1}}\frac{1}{\sqrt{|z|}}
		\le
		\|K_{n,z}\|_{\mathscr{H}_n^2}
		\le
		\frac{\sqrt{\pi}}{(n-1)!\sqrt{n}}\frac{1}{\sqrt{|z|}},
		\qquad z\in\C^+.
	\]
	On the other hand, Kucik proved in \cite{a1} that if a Hilbert function
	space over a domain is a generalized Banach algebra under pointwise
	multiplication, then its reproducing kernels must satisfy
	\[
		\sup_{z\in\Omega}\|K_z\|_{\mathscr H}<\infty.
	\]
	But the lower bound above shows that
	\[
		\|K_{n,z}\|_{\mathscr{H}_n^2}\to\infty
		\qquad\text{as } |z|\to0^+,
	\]
	which is a contradiction. Therefore $\mathscr{H}_n^2(\C^+)$ is not a
	generalized Banach algebra.
\end{proof}

The Hilbertian description developed in this section complements the
boundary-based function theory established in Section~3. We now turn to
operator-theoretic applications on $H_n^p(\C^+)$.

\section{Operator Theory on $H_n^p(\C^+)$}

In this section we turn to operator-theoretic consequences of the function
theory developed in Section~3. In particular, the point-evaluation
structure and the generalized Banach algebra property obtained there provide
the basic tools for studying multiplication operators and weighted
composition operators on $H_n^p(\C^+)$. Our main result in the first part of
the section is a precise spectral description of multiplication operators
induced by multipliers. In the second part, we establish two useful
sufficient conditions for the boundedness of weighted composition operators.

\subsection{Multiplication Operators}

Let
\[
	\mathscr{M}_{n,p}
	=
	\left\{
		\psi\in H(\C^+) : \psi F\in H_n^p(\C^+)\ \text{for all } F\in H_n^p(\C^+)
	\right\}
\]
denote the multiplier space of $H_n^p(\C^+)$. For each
$\psi\in\mathscr{M}_{n,p}$, the associated multiplication operator is
defined by
\[
	T_\psi F=\psi F, \qquad F\in H_n^p(\C^+).
\]

We begin with the basic boundedness of multiplication operators and several
elementary structural properties of the multiplier space.

\begin{prop}
	Let $\psi\in\mathscr{M}_{n,p}$. Then the multiplication operator
	$T_\psi:H_n^p(\C^+)\to H_n^p(\C^+)$ is bounded.
\end{prop}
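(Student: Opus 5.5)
The plan is to use the closed graph theorem. By Corollary~\ref{c2b}, $H_n^p(\C^+)$ is a Banach space. By the definition of $\mathscr{M}_{n,p}$, the operator $T_\psi$ is well defined and linear from $H_n^p(\C^+)$ into itself. It therefore suffices to show that the graph of $T_\psi$ is closed.

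Suppose $F_j\to F$ in $H_n^p(\C^+)$ and $\psi F_j\to G$ in $H_n^p(\C^+)$. We must show that $G=\psi F$. The key ingredient is that norm convergence in $H_n^p(\C^+)$ implies pointwise convergence on $\C^+$. This follows from the embedding $H_n^p(\C^+)\hookrightarrow H^p(\C^+)$, since $\|\cdot\|_{H^p}\le\|\cdot\|_{H_n^p}$, together with boundedness of point evaluation on $H^p(\C^+)$. For $1\le p<\infty$ we use the classical estimate $|F(z)|\le C_p(\operatorname{Im} z)^{-1/p}\|F\|_{H^p}$, which can be read off from the Poisson representation of $F$ via H\"older's inequality. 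For $p=\infty$ the bound is immediate.

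When $n\ge1$ one can alternatively invoke the Sobolev-type embedding \eqref{e3}, which gives $|F(z)|\le C\|F\|_{H_n^p}$ uniformly in $z$. The case $n=0$ is the classical Hardy case and is covered by the first argument. Either way, for each $z\in\C^+$ we get $F_j(z)\to F(z)$ and $\psi(z)F_j(z)\to G(z)$. Since $\psi(z)$ is a fixed finite number ($\psi$ being holomorphic on $\C^+$), we have $\psi(z)F_j(z)\to\psi(z)F(z)$. Hence $G(z)=\psi(z)F(z)$ for every $z$, that is, $G=T_\psi F$. The closed graph theorem then yields boundedness of $T_\psi$.

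The only point needing care is the pointwise evaluation bound, particularly in the case $n=0$ where \eqref{e3} is unavailable. I expect to handle this by citing the standard Hardy space estimate through the Poisson integral representation recalled in Section~2, so no real obstacle is anticipated.
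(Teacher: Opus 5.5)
Your proposal is correct and follows essentially the same route as the paper: the closed graph theorem on the Banach space $H_n^p(\C^+)$, with closedness of the graph deduced from bounded point evaluations. These come from the Sobolev-type embedding for $n\ge1$ and from classical Hardy theory for $n=0$. The differences are cosmetic: you test a general pair $F_j\to F$ rather than $F_j\to0$, and you make the $n=0$ evaluation bound explicit via the Poisson kernel and H\"older's inequality, where the paper simply cites the classical theory.
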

\begin{proof}
	Since $H_n^p(\C^+)$ is a Banach space, it suffices by the closed graph
	theorem to show that the graph of $T_\psi$ is closed. Let $\{F_m\}$ be a
	sequence in $H_n^p(\C^+)$ such that
	\[
		F_m\to0 \quad\text{and}\quad T_\psi(F_m)\to G
	\]
	in $H_n^p(\C^+)$ for some $G\in H_n^p(\C^+)$, and prove that $G=0$.

	For $n\ge1$, boundedness of point evaluations follows from the Sobolev-type
	embedding established in Section~3; for $n=0$, it is part of the classical
	Hardy-space theory. Thus, for each fixed $z\in\C^+$, there exists a constant
	$C_z>0$ such that
	\[
		|F(z)|\le C_z\|F\|_{H_n^p}
		\qquad\text{for all } F\in H_n^p(\C^+).
	\]
	Hence
	\[
		\begin{aligned}
			|G(z)|
			&\le |G(z)-\psi(z)F_m(z)|+|\psi(z)F_m(z)| \\
			&\le C_z\|G-T_\psi(F_m)\|_{H_n^p}
			+|\psi(z)|\,C_z\|F_m\|_{H_n^p}\to0
		\end{aligned}
	\]
	as $m\to\infty$. Since $z\in\C^+$ was arbitrary, we conclude that
	$G\equiv0$. Therefore the graph of $T_\psi$ is closed, and hence $T_\psi$
	is bounded.
\end{proof}

The space $\mathscr{M}_{n,p}$ is naturally equipped with the operator norm
\[
	\|\psi\|_{\mathscr M}:=\|T_\psi\|_{\mathscr B(H_n^p)}.
\]
Thus the boundedness problem for multiplication operators is equivalent to
describing the multiplier space itself.

Recall that for each $z\in\C^+$, the evaluation functional
\[
	M_z:H_n^p(\C^+)\to\C,\qquad F\mapsto F(z),
\]
is bounded. The next proposition shows that these point evaluations play the
role of eigenvectors for the adjoint multiplication operator.

\begin{prop}
	\label{p5}
	If $\psi\in\mathscr{M}_{n,p}$, then
	\[
		T_\psi^*(M_z)=\psi(z)M_z
		\qquad\text{for every } z\in\C^+.
	\]
\end{prop}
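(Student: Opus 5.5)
The plan is to verify the identity directly from the definition of the Banach-space adjoint. First, $M_z$ is a bounded linear functional on $H_n^p(\C^+)$, so $M_z$ lies in the dual space $(H_n^p(\C^+))^*$. For $n\ge1$ boundedness follows from the Sobolev-type embedding \eqref{e3}, since $|F(z)|\le\|F\|_{H^\infty}\le C\|F\|_{H_n^p}$. For $n=0$ it is part of the classical Hardy-space theory, as already recorded in the closed-graph proof above. Second, the preceding proposition gives $T_\psi\in\mathscr B(H_n^p(\C^+))$. Hence the adjoint $T_\psi^*:(H_n^p(\C^+))^*\to(H_n^p(\C^+))^*$ is well defined by $(T_\psi^*\Lambda)(F)=\Lambda(T_\psi F)$.

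The main step is then a one-line computation. For every $F\in H_n^p(\C^+)$,
\[
	(T_\psi^*M_z)(F)=M_z(T_\psi F)=M_z(\psi F)=\psi(z)F(z)=\psi(z)M_z(F).
\]
Two functionals that agree on every element of the space are equal, so $T_\psi^*(M_z)=\psi(z)M_z$ in the dual.

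There is no real obstacle here. The only points to state explicitly are that $M_z$ indeed belongs to the dual, so that the expression $T_\psi^*(M_z)$ makes sense, and that $\psi(z)$ is a finite complex number. The latter holds because $\psi$ is holomorphic on $\C^+$. Finally, I would remark that $M_z\neq0$, since for instance $F(w)=(w+i)^{-2}\in H_n^p(\C^+)$ does not vanish at $z$. Therefore $\psi(z)$ is a genuine eigenvalue of $T_\psi^*$, and this is the form in which the statement will feed into the spectral formula.
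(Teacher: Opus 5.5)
Your proposal is correct and matches the paper's proof, which is the same one-line computation $(T_\psi^*M_z)(F)=M_z(\psi F)=\psi(z)M_z(F)$, with boundedness of $M_z$ and $T_\psi$ taken from the preceding results. Your extra checks are sound additions: that $M_z$ lies in the dual, and that $M_z\neq0$ via $(w+i)^{-2}\in H_n^p(\C^+)$. The paper uses the latter tacitly when it calls $\psi(z)$ an eigenvalue of $T_\psi^*$.
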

\begin{proof}
	For any $F\in H_n^p(\C^+)$,
	\[
		(T_\psi^*M_z)(F)=M_z(T_\psi F)=M_z(\psi F)=\psi(z)F(z)=\psi(z)M_z(F),
	\]
	which proves the claim.
\end{proof}

\begin{re}
	In a reproducing kernel Hilbert space, bounded multiplication operators are
	often detected through the fact that kernel functions are eigenvectors of
	the adjoint. Proposition~\ref{p5} is the corresponding point-evaluation
	version of this observation in the present setting.
\end{re}

We next record several basic inclusion relations for the multiplier space.

\begin{prop}
	\label{p6}
	Let $1\le p\le\infty$ and $n\in\mathbb N$.
	\begin{enumerate}[label=\textup{(\alph*)}, nosep, wide=0pt, align=left]
		\item $H_n^\infty(\C^+)\hookrightarrow \mathscr{M}_{n,p}$;
		\item $H_n^p(\C^+)\hookrightarrow \mathscr{M}_{n,p}$ whenever $n\ge1$;
		\item $\mathscr{M}_{n,p}\hookrightarrow H^\infty(\C^+)$.
	\end{enumerate}
\end{prop}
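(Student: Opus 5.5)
For (a), fix $\psi\in H_n^\infty(\C^+)$ and $F\in H_n^p(\C^+)$, and expand $(\psi F)^{(k)}=\sum_{j=0}^k\binom{k}{j}\psi^{(k-j)}F^{(j)}$ by the Leibniz rule. Each term is bounded by
\[
\|\psi^{(k-j)}F^{(j)}\|_{H^p}\le\|\psi^{(k-j)}\|_{H^\infty}\|F^{(j)}\|_{H^p}\le\|\psi\|_{H_n^\infty}\|F\|_{H_n^p}.
\]
This gives $\|(\psi F)^{(k)}\|_{H^p}\le 2^k\|\psi\|_{H_n^\infty}\|F\|_{H_n^p}$. Summing the $p$th powers over $k$ (or summing directly when $p=\infty$) yields $\|T_\psi\|\le c_{n,p}\|\psi\|_{H_n^\infty}$. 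This shows both that $\psi\in\mathscr M_{n,p}$ and that the inclusion is continuous. Here the product $\psi^{(k-j)}F^{(j)}$ lies in $H^p$ because it is holomorphic and dominated by an $H^p$ function on every horizontal line.

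For (b), with $n\ge1$, the inclusion is essentially Theorem~\ref{t4}. That theorem gives $\|\psi F\|_{H_n^p}\le C'\|\psi\|_{H_n^p}\|F\|_{H_n^p}$, so $\psi\in\mathscr M_{n,p}$ and $\|\psi\|_{\mathscr M}\le C'\|\psi\|_{H_n^p}$. Alternatively, one can deduce (b) from (a) once one knows $H_n^p\hookrightarrow H_{n}^\infty$, but that is false in general: the top derivative need not be bounded. So I will use Theorem~\ref{t4} directly.

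For (c), I will use the eigenvector relation of Proposition~\ref{p5}. Since $T_\psi^*M_z=\psi(z)M_z$ and $M_z\neq0$ (for example, some $F\in H_n^p$ with $F(z)\ne0$ exists, such as $(w+i)^{-2}$), we get
\[
|\psi(z)|\,\|M_z\|=\|T_\psi^*M_z\|\le\|T_\psi^*\|\,\|M_z\|=\|T_\psi\|\,\|M_z\|.
\]
Hence $|\psi(z)|\le\|\psi\|_{\mathscr M}$ for every $z\in\C^+$, which is the desired continuous embedding with constant $1$.

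The only point requiring care is that $M_z$ is a nonzero bounded functional in every case, including $n=0$ and $p=1,\infty$. For $n\ge1$, boundedness follows from the Sobolev embedding \eqref{e3}; for $n=0$ it is classical. Nonvanishing comes from exhibiting the explicit test function $(w+i)^{-2}$, which lies in $H_n^p(\C^+)$ for all $p$ and $n$ since all its derivatives are $O(|w+i|^{-2})$. I expect no real obstacle; the main subtlety is in (b), where one must rely on the algebra estimate of Theorem~\ref{t4} rather than on (a).
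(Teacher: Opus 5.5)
Your proposal is correct and follows essentially the same route as the paper: Leibniz' rule with $H^\infty$ control of the derivatives of $\psi$ for (a), Theorem~\ref{t4} for (b), and the eigenvector relation $T_\psi^*M_z=\psi(z)M_z$ of Proposition~\ref{p5} for (c). The differences are cosmetic. In (c) you bound $|\psi(z)|$ directly by $\|T_\psi^*\|=\|T_\psi\|$ acting on the eigenvector $M_z$, rather than passing through $\sigma(T_\psi)$ and the spectral radius. You also check explicitly, via $(w+i)^{-2}$, that $M_z\neq0$, which the paper leaves implicit.
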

\begin{proof}
	For (a), if $n=0$, then for every $F\in H^p(\C^+)$,
	\[
		\|\psi F\|_{H^p}\le \|\psi\|_{H^\infty}\|F\|_{H^p},
	\]
	so $\psi\in\mathscr M_{0,p}$. If $n\ge1$, the same conclusion follows from
	Leibniz' rule together with the estimate used in the proof of
	Theorem~\ref{t4}, replacing one factor by $\psi$ and controlling all
	derivatives of $\psi$ by the $H_n^\infty$ norm.

	Part (b) is an immediate consequence of Theorem~\ref{t4}, since
	$H_n^p(\C^+)$ is a generalized Banach algebra for $n\ge1$.

	For (c), Proposition~\ref{p5} implies that each value $\psi(z)$ belongs to
	the point spectrum of $T_\psi^*$, hence to the spectrum of $T_\psi$. Thus
	\[
		|\psi(z)|\le r(T_\psi)\le \|T_\psi\|=\|\psi\|_{\mathscr M},
		\qquad z\in\C^+,
	\]
	and therefore $\psi\in H^\infty(\C^+)$.
\end{proof}

\begin{re}
	In general, none of the inclusions in Proposition~\ref{p6} is reversible.
	For instance, when $p=2$ and $n=1$:
	\begin{enumerate}[label=\textup{(\alph*)}, nosep]
		\item $H_1^2(\C^+)\not\subset H_1^\infty(\C^+)$;
		\item $\mathscr M_{1,2}\not\subset H_1^2(\C^+)$, since nonzero constants
		belong to $\mathscr M_{1,2}$ but not to $H_1^2(\C^+)$;
		\item $H^\infty(\C^+)\not\subset \mathscr M_{1,2}$.
	\end{enumerate}
	A necessary and sufficient condition for membership in $\mathscr M_{n,2}$ is
	given in \cite[Theorem~2]{a1}.
\end{re}

To obtain the spectral description of multiplication operators, we need the
following invertibility lemma.

\begin{lem}
	\label{l1}
	If $\psi\in\mathscr{M}_{n,p}$ and
	\[
		\inf_{z\in\C^+}|\psi(z)|>0,
	\]
	then $1/\psi\in\mathscr{M}_{n,p}$.
\end{lem}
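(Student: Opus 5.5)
The approach is to show that for every $F\in H_n^p(\C^+)$ the holomorphic function $G:=F/\psi$ lies in $H_n^p(\C^+)$. Set $\delta:=\inf_{z\in\C^+}|\psi(z)|>0$. The case $n=0$ is immediate: $|G|\le\delta^{-1}|F|$ pointwise on every horizontal line, so $\|G\|_{H^p}\le\delta^{-1}\|F\|_{H^p}$ for all $1\le p\le\infty$.

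For $n\ge1$ I would argue by induction on $k=0,1,\dots,n$, proving that $G^{(k)}\in H^p(\C^+)$ with a bound $\|G^{(k)}\|_{H^p}\le C_k\|F\|_{H_n^p}$. Differentiating the identity $\psi G=F$ by Leibniz' rule gives
\[
\psi\,G^{(k)}=F^{(k)}-\sum_{j=1}^{k}\binom{k}{j}\psi^{(j)}G^{(k-j)},
\]
and hence $\|G^{(k)}\|_{H^p}\le\delta^{-1}\bigl(\|F^{(k)}\|_{H^p}+\sum_{j\ge1}\binom kj\|\psi^{(j)}G^{(k-j)}\|_{H^p}\bigr)$. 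To avoid differentiating up to the boundary, I would first carry out all estimates for the shifted functions $G_\varepsilon(z)=G(z+i\varepsilon)$ and $\psi_\varepsilon$. These are smooth up to $\R$, and the multiplier property passes to them because translation is an isometry of $H_n^p(\C^+)$. I would then let $\varepsilon\to0^+$ using the uniform bounds and the definition of the $H^p$ norm as a supremum over $y>0$.

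The main obstacle is the mixed terms $\psi^{(j)}G^{(k-j)}$. Since $\psi$ is only known to lie in $H^\infty$ (Proposition~\ref{p6}(c)), its derivatives need not be bounded; Cauchy estimates give only $|\psi^{(j)}(z)|\lesssim \operatorname{Im}(z)^{-j}$. My first attempt is to extract control of $\psi^{(j)}$ from the multiplier property itself. Write $\psi^{(j)}G^{(k-j)}$ as the $j$-th derivative term in the expansion of $(\psi H)^{(j)}$ with $H=G^{(k-j)}$, then subtract the remaining terms, all of which contain fewer derivatives of $\psi$. This sets up a secondary induction on $j$, in which $\|(\psi H)^{(j)}\|_{H^p}\le\|\psi\|_{\mathscr M}\|H\|_{H^p_j}$ is used. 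The drawback is that $H=G^{(k-j)}$ must then have $j$ further derivatives in $H^p$, so this requires organizing the double induction so that only derivatives of order $\le k-1$ of $G$ (already controlled) appear, together with one top-order term $G^{(k)}$ that can be absorbed.

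If this absorption fails, the fallback is operator-theoretic. The estimate above, applied with the roles reversed, would show that $T_\psi$ is bounded below on $H_n^p(\C^+)$, i.e.\ $\|F\|_{H_n^p}\le C\|\psi F\|_{H_n^p}$. Hence $T_\psi$ is injective with closed range. Surjectivity would then be argued by a density argument: for test functions $F$ built from the Cauchy kernels $(z-\overline w)^{-m}$, one checks directly that $F/\psi$ lies in the space. The inverse operator then agrees pointwise with multiplication by $1/\psi$, so $1/\psi\in\mathscr M_{n,p}$ with $\|1/\psi\|_{\mathscr M}=\|T_\psi^{-1}\|$.
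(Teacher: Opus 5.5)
There is a genuine gap. The mixed terms $\psi^{(j)}G^{(k-j)}$, which you correctly identify as the whole difficulty, are never actually controlled, and neither of your two routes can control them as described.

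In the secondary induction you use $\|(\psi H)^{(j)}\|_{H^p}\le\|\psi\|_{\mathscr M}\|H\|_{H_j^p}$. This presupposes $\psi\in\mathscr M_{j,p}$, whereas only $\psi\in\mathscr M_{n,p}$ is given. Moreover, with $H=G^{(k-j)}$ it brings back $G^{(k)}$, which cannot be absorbed. Already for $k=j=1$ the step is circular: $\psi'G=(\psi G)'-\psi G'=F'-\psi G'$, so substituting into $\psi G'=F'-\psi'G$ gives the tautology $\psi G'=\psi G'$. At the level of norms, $\|G'\|_{H^p}$ carries coefficient $\delta$ on the left but $\|\psi\|_{\mathscr M}+\|\psi\|_\infty\ge2\delta$ in the resulting upper bound, since $\|\psi\|_{\mathscr M}\ge\|\psi\|_\infty\ge\delta$.

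The fallback inherits the same problem. ``Bounded below'' is precisely this unproven a priori estimate. Verifying $F/\psi\in H_n^p(\C^+)$ ``directly'' for Cauchy-kernel combinations $F$ is the lemma itself for special $F$, and it faces the same unbounded derivatives of $\psi$. Moreover, for $p=\infty$ such combinations are not dense in the nonseparable space $H_n^\infty(\C^+)$; the functions $e^{itz}$, $t>0$, are pairwise at distance at least $2$. The $\varepsilon$-shift buys nothing here, and vertical translation is only a contraction of $H_n^p(\C^+)$, not an isometry.

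The missing idea is to apply the multiplier property to $F$, which is known to lie in $H_n^p(\C^+)$, rather than to $G$ or its derivatives, and to eliminate derivatives of $\psi$ algebraically. Iterating gives $\psi^mF\in H_n^p(\C^+)$ with $\|\psi^mF\|_{H_n^p}\le\|\psi\|_{\mathscr M}^m\|F\|_{H_n^p}$. The paper then uses the higher-order quotient rule
\[
D^k\Bigl(\frac{F}{\psi}\Bigr)=\frac{(-1)^k}{\psi^{k+1}}\sum_{j=0}^k(-1)^j\binom{k+1}{j}\psi^j D^k\bigl(\psi^{k-j}F\bigr),
\]
in which no derivative of $\psi$ appears explicitly. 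Each $D^k(\psi^{k-j}F)$ lies in $H^p(\C^+)$, and $|\psi^j/\psi^{k+1}|\le\|\psi\|_\infty^j\delta^{-k-1}$. Hence $D^k(F/\psi)\in H^p(\C^+)$ for every $k\le n$ and every $1\le p\le\infty$ at once, with no induction, absorption, regularization or density argument.

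For $k=1$ the rule reads $(F/\psi)'=\bigl(2\psi F'-(\psi F)'\bigr)/\psi^2$. In other words, the term you could not control, $\psi'G=\psi'F/\psi$, is handled by writing $\psi'F=(\psi F)'-\psi F'$. In general the identity expresses the vanishing of the $(k+1)$-st finite difference in $m$ of $\psi^{-m}D^k(\psi^mG)$, which is a polynomial of degree at most $k$ in $m$.
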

\begin{proof}
	Clearly $\psi\in H^\infty(\C^+)$ by Proposition~\ref{p6}(c), and
	$1/\psi\in H^\infty(\C^+)$ because $\inf_{z\in\C^+}|\psi(z)|>0$.

	Let $F\in H_n^p(\C^+)$. By the higher-order quotient rule
	\cite[Corollary~5]{a7},
	\[
		D^k\!\left(\frac{F}{\psi}\right)
		=
		\frac{(-1)^k}{\psi^{k+1}}
		\sum_{j=0}^k (-1)^j\binom{k+1}{j}\,
		\psi^j D^k\!\left(\psi^{\,k-j}F\right),
		\qquad k=0,1,\dots,n.
	\]
	Since $\psi\in\mathscr M_{n,p}$, the multiplication operator $T_\psi$ is
	bounded on $H_n^p(\C^+)$. Hence, by iteration, multiplication by
	$\psi^m$ is bounded on $H_n^p(\C^+)$ for every integer $m\ge1$. It follows
	that $\psi^{k-j}F\in H_n^p(\C^+)$, and therefore
	\[
		D^k(\psi^{k-j}F)\in H^p(\C^+).
	\]
	Since both $\psi$ and $1/\psi$ are bounded, each term on the right-hand side
	belongs to $H^p(\C^+)$, and hence $D^k(F/\psi)\in H^p(\C^+)$ for all
	$k=0,1,\dots,n$. Therefore $(1/\psi)F\in H_n^p(\C^+)$, proving that
	$1/\psi\in\mathscr M_{n,p}$.
\end{proof}

The next theorem is the main result of this subsection. It gives a complete
spectral description of multiplication operators on $H_n^p(\C^+)$ and shows
that the spectrum is determined exactly by the range of the multiplier
symbol.

\begin{thm}
	\label{t7}
	Let $\psi\in\mathscr{M}_{n,p}$. Then
	\[
		\sigma(T_\psi)=\overline{\psi(\C^+)}.
	\]
\end{thm}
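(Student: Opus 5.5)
The proof splits into the two inclusions $\overline{\psi(\C^+)}\subseteq\sigma(T_\psi)$ and $\sigma(T_\psi)\subseteq\overline{\psi(\C^+)}$.

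\textbf{First inclusion.} For each $z\in\C^+$, Proposition~\ref{p5} gives $T_\psi^*M_z=\psi(z)M_z$. The functional $M_z$ is nonzero: $H_n^p(\C^+)$ contains functions not vanishing at $z$, for instance $(w+i)^{-2}$, which lies in $H_n^p(\C^+)$ for every $p\ge1$ since all its derivatives are $O(|w+i|^{-2})$. Hence $\psi(z)\in\sigma_p(T_\psi^*)\subseteq\sigma(T_\psi^*)=\sigma(T_\psi)$. This gives $\psi(\C^+)\subseteq\sigma(T_\psi)$, and since the spectrum is closed, $\overline{\psi(\C^+)}\subseteq\sigma(T_\psi)$.

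\textbf{Reverse inclusion.} Let $\lambda\notin\overline{\psi(\C^+)}$. Then $\delta:=\inf_{z\in\C^+}|\psi(z)-\lambda|>0$. We need $\psi-\lambda\in\mathscr M_{n,p}$, which holds because constants are multipliers (part (a) of Proposition~\ref{p6}, or directly), so $\psi-\lambda\in\mathscr M_{n,p}$ with $T_{\psi-\lambda}=T_\psi-\lambda I$. Lemma~\ref{l1} then shows $1/(\psi-\lambda)\in\mathscr M_{n,p}$, so $T_{1/(\psi-\lambda)}$ is a bounded operator on $H_n^p(\C^+)$; boundedness is automatic by the closed-graph proposition. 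Since
\[
T_{1/(\psi-\lambda)}(T_\psi-\lambda I)=(T_\psi-\lambda I)T_{1/(\psi-\lambda)}=I
\]
pointwise, $T_\psi-\lambda I$ is invertible, so $\lambda\notin\sigma(T_\psi)$.

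\textbf{Main obstacle.} Both directions rest on results already established, so the only step needing care is the application of Lemma~\ref{l1}: checking that its hypotheses are met for $\psi-\lambda$, i.e.\ that translating a multiplier by a constant stays in $\mathscr M_{n,p}$ and that the infimum bound is uniform over all of $\C^+$. Both are immediate. A minor point is the nontriviality of $M_z$, handled by the explicit test function above.
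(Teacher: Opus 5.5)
Your proof is correct and follows the paper's argument. For $\overline{\psi(\C^+)}\subseteq\sigma(T_\psi)$ you use the eigenvector relation $T_\psi^*M_z=\psi(z)M_z$ from Proposition~\ref{p5}, and for the reverse inclusion you apply Lemma~\ref{l1} to $\psi-\lambda$ to get the bounded inverse $T_{1/(\psi-\lambda)}$; your extra checks that $M_z\neq0$ (via $(w+i)^{-2}$) and that $\psi-\lambda\in\mathscr M_{n,p}$ fill in details the paper leaves implicit.
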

\begin{proof}
	By Proposition~\ref{p5}, every value $\psi(z)$ is an eigenvalue of
	$T_\psi^*$, hence belongs to $\sigma(T_\psi)$. Therefore
	\[
		\overline{\psi(\C^+)}\subseteq \sigma(T_\psi).
	\]

	Conversely, let $\lambda\notin \overline{\psi(\C^+)}$. Then there exists
	$\delta>0$ such that
	\[
		|\lambda-\psi(z)|\ge \delta
		\qquad\text{for all } z\in\C^+.
	\]
	Applying Lemma~\ref{l1} to $\lambda-\psi$, we obtain
	\[
		\frac1{\lambda-\psi}\in\mathscr{M}_{n,p}.
	\]
	It is then immediate that
	\[
		(\lambda I-T_\psi)^{-1}=T_{1/(\lambda-\psi)}.
	\]
	Hence $\lambda\notin \sigma(T_\psi)$, so
	\[
		\sigma(T_\psi)\subseteq \overline{\psi(\C^+)}.
	\]
	This proves the theorem.
\end{proof}

An immediate and useful consequence of Theorem~\ref{t7} is that compact
multiplication operators are completely rigid.

\begin{cor}
	Let $\psi\in\mathscr{M}_{n,p}$. Then $T_\psi$ is compact if and only if
	$\psi\equiv0$.
\end{cor}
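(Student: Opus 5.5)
The "if" direction is trivial: $T_0=0$ is compact. For the converse, assume $T_\psi$ is compact. The plan is to combine Theorem~\ref{t7} with the Riesz--Schauder theory of compact operators on the Banach space $H_n^p(\C^+)$ (Corollary~\ref{c2b}), and then use the open mapping theorem for holomorphic functions.

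First, since $H_n^p(\C^+)$ is infinite-dimensional, the spectrum of a compact operator is an at most countable set whose only possible accumulation point is $0$; in particular it contains $0$. Theorem~\ref{t7} gives $\sigma(T_\psi)=\overline{\psi(\C^+)}$, so $\psi(\C^+)$ is at most countable. Infinite-dimensionality is easy to confirm. For instance, the functions $F_a(z)=(z+ia)^{-2}$ with $a>0$ lie in $H_n^p(\C^+)$ for every $n$ and $p$, because all their derivatives are bounded multiples of $(z+ia)^{-m}$ with $m\ge2$, and any finite family with distinct $a$ is linearly independent.

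Next, $\psi$ is holomorphic on the connected open set $\C^+$. If it were nonconstant, the open mapping theorem would make $\psi(\C^+)$ a nonempty open subset of $\C$, which is uncountable. Hence $\psi\equiv c$ is constant, and then $\sigma(T_\psi)=\{c\}$. Since $0\in\sigma(T_\psi)$ for a compact operator on an infinite-dimensional space, $c=0$. Alternatively, $T_c=cI$ is compact only if $c=0$, because the identity on an infinite-dimensional Banach space is not compact (Riesz's lemma).

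The only point needing any care is the infinite-dimensionality of $H_n^p(\C^+)$, including the case $p=\infty$. The explicit family $(z+ia)^{-2}$ settles this uniformly in $p$ and $n$, so I do not expect a genuine obstacle.
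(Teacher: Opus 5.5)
Your proposal is correct and follows essentially the same route as the paper: the spectrum of a compact operator is countable, so Theorem~\ref{t7} and the open mapping theorem force $\psi$ to be constant, and then $0\in\sigma(T_\psi)$ (or, as you note, non-compactness of the identity) forces that constant to be $0$. Your explicit family $(z+ia)^{-2}$ justifies the infinite-dimensionality of $H_n^p(\C^+)$, which the paper only asserts; also, only the easy inclusion $\psi(\C^+)\subseteq\sigma(T_\psi)$ from Proposition~\ref{p5} is actually needed.
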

\begin{proof}
	If $T_\psi$ is compact, then its spectrum is countable and has no
	accumulation point except possibly $0$. By Theorem~\ref{t7},
	\[
		\sigma(T_\psi)=\overline{\psi(\C^+)},
	\]
	so $\overline{\psi(\C^+)}$ must be countable. Since a nonconstant holomorphic
	function maps open sets onto open sets, the open mapping theorem forces
	$\psi$ to be constant. Since $H_n^p(\C^+)$ is infinite-dimensional and
	$0\in\sigma(T_\psi)$ for every compact operator on an infinite-dimensional
	Banach space, that constant must be $0$.
\end{proof}

\subsection{Weighted Composition Operators}

We now turn to weighted composition operators
\[
	T_{\psi,\varphi}F=\psi\cdot(F\circ\varphi),
\]
where $\psi\in H(\C^+)$ and $\varphi\in\mathscr S(\C^+)$ is an analytic
self-map of $\C^+$. In contrast to multiplication operators, a complete
description of bounded weighted composition operators on $H_n^p(\C^+)$ seems
to be substantially more delicate. We therefore restrict ourselves to two
practical sufficient conditions for boundedness, both of which are readily
verifiable in concrete situations.

Our first criterion is based on a lower bound involving the derivative of
the symbol $\varphi$.

\begin{thm}
	\label{t8}
	Let $n\in\mathbb N^+$. Assume that
	\[
		\psi\in H_n^\infty(\C^+) \quad \text{or} \quad \psi\in H_n^p(\C^+),
	\]
	and that $\varphi\in\mathscr S(\C^+)$ satisfies
	$\varphi'\in H_{n-1}^\infty(\C^+)$ and the function
	\[
		A_\varphi(z,w)=\bigl|Re(\varphi'(z))+i\,Im(\varphi'(w))\bigr|,
		\qquad z,w\in\C^+,
	\]
	has a positive lower bound. Then $T_{\psi,\varphi}$ is bounded on
	$H_n^p(\C^+)$.
\end{thm}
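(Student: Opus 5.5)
The plan is to factor $T_{\psi,\varphi}=T_\psi\circ C_\varphi$, where $C_\varphi F=F\circ\varphi$. The multiplication part is already handled by Proposition~\ref{p6}. If $\psi\in H_n^\infty(\C^+)$, part (a) gives $\psi\in\mathscr M_{n,p}$. If $\psi\in H_n^p(\C^+)$ with $n\ge1$, part (b), which rests on Theorem~\ref{t4}, gives the same conclusion. In either case $T_\psi$ is bounded on $H_n^p(\C^+)$, so everything reduces to showing that $C_\varphi$ is bounded on $H_n^p(\C^+)$.

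\textbf{Step 1: Faà di Bruno reduction.} For $1\le k\le n$, Faà di Bruno's formula writes $(F\circ\varphi)^{(k)}$ as a finite sum of terms
\[
(F^{(j)}\circ\varphi)\cdot\prod_i \varphi^{(m_i)},\qquad 1\le j\le k,\quad m_i\le k.
\]
Each factor satisfies $\varphi^{(m)}=(\varphi')^{(m-1)}$ with $m-1\le n-1$, so it is bounded by $\|\varphi'\|_{H_{n-1}^\infty}$. It therefore suffices to prove a single estimate:
\[
\|G\circ\varphi\|_{H^p}\le C\|G\|_{H^p}\qquad\text{for } G\in H^p(\C^+),
\]
applied to $G=F^{(j)}$ for $0\le j\le n$. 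Summing over $k$ then bounds $\|C_\varphi F\|_{H_n^p}$ by a multiple of $\|F\|_{H_n^p}$. For $p=\infty$ this estimate is trivial because $\varphi$ maps $\C^+$ into itself, so assume $1\le p<\infty$.

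\textbf{Step 2: using the hypothesis on $A_\varphi$.} Let $\delta>0$ be the lower bound of $A_\varphi$.
\begin{itemize}
\item First, $|\mathrm{Re}\,\varphi'|$ and $|\mathrm{Im}\,\varphi'|$ cannot both become small. If $|\mathrm{Re}\,\varphi'(z)|<\delta/2$ and $|\mathrm{Im}\,\varphi'(w)|<\delta/2$ for some $z,w$, then $A_\varphi(z,w)<\delta$. Hence either $\inf|\mathrm{Re}\,\varphi'|\ge\delta/2$ or $\inf|\mathrm{Im}\,\varphi'|\ge\delta/2$.
\item The second alternative is impossible. By Cauchy--Riemann, $\partial_x\,\mathrm{Im}\,\varphi(x+iy)=\mathrm{Im}\,\varphi'(x+iy)$. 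This derivative has constant sign by continuity and has modulus at least $\delta/2$. So $x\mapsto\mathrm{Im}\,\varphi(x+iy)$ would become negative for $|x|$ large, contradicting $\varphi(\C^+)\subset\C^+$.
\item Thus $|\mathrm{Re}\,\varphi'|\ge\delta/2$ on all of $\C^+$, with a fixed sign. Consequently, for each fixed $y>0$, the map $x\mapsto u(x):=\mathrm{Re}\,\varphi(x+iy)$ is a $C^1$ bijection onto its image with $|u'|\ge\delta/2$.
\end{itemize}

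\textbf{Step 3: the $H^p$ composition estimate (main obstacle).} Write $\varphi(x+iy)=u(x)+i s(x)$ with $s(x)>0$. Since the height $s(x)$ varies along the curve, the bound $\|G(\cdot+is)\|_{L^p}\le\|G\|_{H^p}$ at a fixed height is not enough. I would instead use the vertical maximal function $G^*(t)=\sup_{s>0}|G(t+is)|$. Classical Hardy space theory gives $\|G^*\|_{L^p}\le C_p\|G\|_{H^p}$ for every $0<p<\infty$, including $p=1$, via the nontangential maximal theorem. Changing variables $t=u(x)$, with $dx\le(2/\delta)\,dt$, gives
\[
\int_{\R}|G(\varphi(x+iy))|^p\,dx\le\frac{2}{\delta}\int_{\R}G^*(t)^p\,dt\le\frac{2C_p^p}{\delta}\|G\|_{H^p}^p,
\]
uniformly in $y$. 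Taking the supremum over $y>0$ gives the Step~1 estimate, and combining with Step~1 and the bounded multiplier $T_\psi$ gives boundedness of $T_{\psi,\varphi}$.

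I expect Step~3 to be the delicate point. One must ensure that the maximal function bound is used correctly, in particular that the vertical supremum is dominated by the nontangential one. One must also check that the monotone change of variables is legitimate on the whole line, which Step~2 guarantees.
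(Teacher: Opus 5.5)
Your argument is correct, but it takes a different route from the paper's proof.

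\textbf{The paper's route.} The paper does not factor the operator. It expands $(\psi(F\circ\varphi))^{(k)}$ in one step by Leibniz and Fa\`a di Bruno, and bounds every $\varphi^{(j)}$ and every $\psi^{(k-j)}$ in $H^\infty(\C^+)$. It then quotes \cite[Theorem~2.8]{a9} for the $H^p(\C^+)$-boundedness of $C_\varphi$ under the $A_\varphi$ hypothesis, and concludes with the closed graph theorem.

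\textbf{Your route.} You write $T_{\psi,\varphi}=T_\psi C_\varphi$ and dispose of $T_\psi$ by Proposition~\ref{p6}(a),(b). You then prove the composition estimate yourself. The key observation is that a positive lower bound for $A_\varphi$ forces $|\mathrm{Re}\,\varphi'|\ge\delta/2$ on all of $\C^+$, because the alternative $|\mathrm{Im}\,\varphi'|\ge\delta/2$ contradicts $\mathrm{Im}\,\varphi>0$. Combining this with the vertical maximal function and the monotone change of variables $t=\mathrm{Re}\,\varphi(x+iy)$ gives $\|G\circ\varphi\|_{H^p}^p\le (2C_p^p/\delta)\|G\|_{H^p}^p$, uniformly in $y$.

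\textbf{Comparison.} The paper's route buys brevity by outsourcing the key step to the literature. Yours is self-contained and gives an explicit operator-norm bound, so no closed graph argument is needed. It also shows that the $A_\varphi$ hypothesis amounts to $\inf_{\C^+}|\mathrm{Re}\,\varphi'|>0$.

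Your factorization also avoids a small slip in the paper's proof. When $\psi\in H_n^p(\C^+)$, the Sobolev embedding gives $\psi^{(m)}\in H^\infty(\C^+)$ only for $m\le n-1$. So the term $\psi^{(n)}(F\circ\varphi)$ must be estimated by $\|\psi^{(n)}\|_{H^p}\|F\|_{H^\infty}$, not with $\psi^{(n)}$ placed in $H^\infty$. Proposition~\ref{p6}(b) handles exactly this, through the role-swapping step in the proof of Theorem~\ref{t4}.
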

\begin{proof}
	Using the Fa\`a di Bruno formula for holomorphic composition, we may write
	\[
		(F\circ\varphi)^{(k)}
		=
		\sum_{a_1+2a_2+\cdots+ka_k=k}
		c_{(a_1,\dots,a_k)}
		(F^{(m_k)}\circ\varphi)
		(\varphi')^{a_1}\cdots(\varphi^{(k)})^{a_k},
	\]
	where $m_k=a_1+\cdots+a_k$. Applying Leibniz' rule to
	$\psi(F\circ\varphi)$, we obtain
	\[
		\begin{aligned}
			\|(\psi(F\circ\varphi))^{(k)}\|_{H^p}
			\le
			\sum_{j=0}^k
			\sum_{a_1,\dots,a_j}
			\binom{k}{j}c_{(a_1,\dots,a_j)}
			\|F^{(m_j)}\circ\varphi\|_{H^p}
			\|\varphi'\|_{H^\infty}^{a_1}\cdots
			\|\varphi^{(j)}\|_{H^\infty}^{a_j}
			\|\psi^{(k-j)}\|_{H^\infty}.
		\end{aligned}
	\]
	Since $\varphi'\in H_{n-1}^\infty(\C^+)$, all derivatives
	$\varphi^{(j)}$ with $1\le j\le n$ belong to $H^\infty(\C^+)$. Moreover, the
	hypothesis on $A_\varphi$ implies the boundedness of the composition
	operator $C_\varphi:F\mapsto F\circ\varphi$ on $H^p(\C^+)$ for
	$1\le p<\infty$; see \cite[Theorem~2.8]{a9}. For $p=\infty$ this is
	immediate. Finally, $\psi^{(k-j)}\in H^\infty(\C^+)$ follows either from
	$\psi\in H_n^\infty(\C^+)$ or from the Sobolev embedding when
	$\psi\in H_n^p(\C^+)$.

	Therefore each derivative of $\psi(F\circ\varphi)$ up to order $n$ belongs
	to $H^p(\C^+)$, so $T_{\psi,\varphi}$ maps $H_n^p(\C^+)$ into itself. Since
	$H_n^p(\C^+)$ is Banach, the closed graph theorem implies that
	$T_{\psi,\varphi}$ is bounded.
\end{proof}

We also use the standard Julia--Carath\'eodory notion of angular derivative
at infinity \cite{a8}. For $\varphi\in\mathscr S(\C^+)$, we say that
$\varphi$ has a finite angular derivative at infinity if
\[
	\sup_{z\in\C^+}\frac{Im(z)}{Im(\varphi(z))}<\infty.
\]
For $0<p<\infty$, this is equivalent to the boundedness of the composition
operator $C_\varphi:F\mapsto F\circ\varphi$ on $H^p(\C^+)$.

This yields a second useful sufficient condition.

\begin{thm}
	\label{t9}
	Let $n\in\mathbb N^+$. Assume that
	\[
		\psi\in H_n^\infty(\C^+) \quad \text{or} \quad \psi\in H_n^p(\C^+),
	\]
	and that $\varphi\in\mathscr S(\C^+)$ satisfies
	$\varphi'\in H_{n-1}^\infty(\C^+)$ and has a finite angular derivative at
	infinity. Then $T_{\psi,\varphi}$ is bounded on $H_n^p(\C^+)$.
\end{thm}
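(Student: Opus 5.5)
The proof will follow the same scheme as Theorem~\ref{t8}. The only change is where boundedness of $C_\varphi$ on $H^p(\C^+)$ comes from: here it is supplied by the finite angular derivative at infinity rather than by the lower bound on $A_\varphi$.

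\textbf{Step 1: expand the derivatives.} Fix $F\in H_n^p(\C^+)$ and $0\le k\le n$. By Leibniz' rule,
\[
(\psi(F\circ\varphi))^{(k)}=\sum_{j=0}^k\binom{k}{j}\psi^{(k-j)}(F\circ\varphi)^{(j)}.
\]
Each $(F\circ\varphi)^{(j)}$ is then expanded by the Fa\`a di Bruno formula, as in the proof of Theorem~\ref{t8}. This writes it as a finite sum of terms
\[
(F^{(m)}\circ\varphi)(\varphi')^{a_1}\cdots(\varphi^{(j)})^{a_j}, \qquad 0\le m\le j\le n.
\]

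\textbf{Step 2: bound each factor.}
\begin{itemize}
\item \emph{Derivatives of $\varphi$.} Since $\varphi'\in H_{n-1}^\infty(\C^+)$, all $\varphi^{(i)}$ with $1\le i\le n$ lie in $H^\infty(\C^+)$.
\item \emph{Derivatives of $\psi$, case $\psi\in H_n^\infty(\C^+)$.} Here $\psi^{(k-j)}\in H^\infty(\C^+)$ for all $0\le k-j\le n$ directly.
\item \emph{Derivatives of $\psi$, case $\psi\in H_n^p(\C^+)$.} The Sobolev embedding \eqref{e3}, applied to $\psi^{(i)}\in H_{n-i}^p(\C^+)$, gives $\psi^{(i)}\in H^\infty(\C^+)$ for $i\le n-1$. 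For the top order $i=n$ it does not apply, so the term with $j=0$, $k=n$, namely $\psi^{(n)}(F\circ\varphi)$, needs separate treatment. There $F\circ\varphi\in H^\infty$, because $F\in H^\infty$ by \eqref{e3} and $\varphi$ is a self-map. So $\psi^{(n)}(F\circ\varphi)\in H^p(\C^+)$ with norm at most $\|\psi^{(n)}\|_{H^p}\|F\|_{H^\infty}\le C\|\psi\|_{H_n^p}\|F\|_{H_n^p}$.
\item \emph{Composed factors.} Each $F^{(m)}$ with $m\le n$ lies in $H^p(\C^+)$. By the equivalence recalled just before the theorem, the finite angular derivative at infinity makes $C_\varphi$ bounded on $H^p(\C^+)$ for $1\le p<\infty$. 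Hence $\|F^{(m)}\circ\varphi\|_{H^p}\le \|C_\varphi\|\,\|F^{(m)}\|_{H^p}$. For $p=\infty$ this bound is trivial, since $\sup|F^{(m)}\circ\varphi|\le\sup|F^{(m)}|$.
\end{itemize}

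\textbf{Step 3: assemble.} Each term in the expansion is a product of $H^\infty$ factors with exactly one $H^p$ factor. Therefore $(\psi(F\circ\varphi))^{(k)}\in H^p(\C^+)$ for all $k\le n$, and $\|T_{\psi,\varphi}F\|_{H_n^p}\le C'\|F\|_{H_n^p}$ with an explicit constant. Alternatively, having shown that $T_{\psi,\varphi}$ maps $H_n^p(\C^+)$ into itself, one can invoke the closed graph theorem: Corollary~\ref{c2b} gives completeness, and point evaluations are bounded, which closes the graph exactly as for multiplication operators.

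\textbf{Main obstacle.} The delicate points are the citation and the top-order term.
\begin{itemize}
\item The equivalence between a finite angular derivative at infinity and boundedness of $C_\varphi$ on $H^p(\C^+)$ is taken from the literature; in particular one needs the direction "finite angular derivative $\Rightarrow$ bounded". If needed, this can be checked via the Nevanlinna representation $\varphi(z)=\alpha z+\beta+\cdots$ with $\alpha>0$ and a subordination argument.
\item In the case $\psi\in H_n^p(\C^+)$, one must make sure that $\psi^{(n)}$, which is only in $H^p$, is always paired with a bounded factor, as arranged in Step 2.
\end{itemize}
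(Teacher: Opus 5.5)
Your proposal is correct and follows the paper's route: Leibniz plus Fa\`a di Bruno, $\varphi^{(i)}\in H^\infty(\C^+)$ from $\varphi'\in H_{n-1}^\infty(\C^+)$, boundedness of $C_\varphi$ on $H^p(\C^+)$ from the Elliott--Jury theorem (trivial for $p=\infty$), and then either a direct estimate or the closed graph theorem. Your separate treatment of the top-order term $\psi^{(n)}(F\circ\varphi)$ when $\psi\in H_n^p(\C^+)$ is more careful than the paper. The paper claims $\psi^{(k-j)}\in H^\infty(\C^+)$ from the Sobolev embedding even when $k-j=n$, where that embedding does not apply.
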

\begin{proof}
	The proof is identical to that of Theorem~\ref{t8}. The only difference is
	that the boundedness of $C_\varphi$ on $H^p(\C^+)$ for $1\le p<\infty$ now
	follows from the Elliott--Jury theorem \cite{a8}, while the case
	$p=\infty$ remains immediate.
\end{proof}

\begin{re}
	For $1\le p<\infty$, a basic example of a symbol satisfying either of the
	above sufficient conditions is the affine map
	\[
		\varphi(z)=kz+b,
		\qquad k>0, \ Im(b)>0.
	\]
	It is also worth emphasizing that the assumption
	$\psi\in H_n^p(\C^+)$, and not only $\psi\in H_n^\infty(\C^+)$, is
	admissible because of the Sobolev embedding theorem established in
	Section~3.
\end{re}

We conclude with a brief remark on the Hilbert case $p=2$. In that setting,
the boundedness of weighted composition operators on $H_n^2(\C^+)$ also
admits a characterization in terms of positivity of an associated kernel;
see \cite[Theorem~7.1]{4}. We record this criterion for completeness, but do
not use it in what follows.

A kernel \( A:\mathbb{C}^+\times\mathbb{C}^+\to\mathbb{C} \) is called
\textbf{non-negative} if
\[
\sum_{i,j=1}^{N} c_i\overline{c_j}\,A(z_i,z_j)\ge 0
\]
for every finite choice of points \( z_1,\dots,z_N\in\mathbb{C}^+ \) and
scalars \( c_1,\dots,c_N\in\mathbb{C} \). Thus the operator
$T_{\psi,\varphi}$ is bounded on $H_n^2(\C^+)$ if and only if there exists
a constant $M\ge0$ such that the kernel
\[
	A_n(z,w)
	=
	\frac{1}{2\pi}\int_0^\infty
	\Bigl[
	M^2e^{ix(w-\overline z)}
	-
	\overline{\psi(w)}\psi(z)\,
	e^{ix(\varphi(w)-\overline{\varphi(z)})}
	\Bigr]
	\frac{1-x^2}{1-x^{2n+2}}\,dx
\]
is non-negative on $\C^+\times\C^+$.
	\section*{Acknowledgements}
	This work was supported by the NSF of Guangdong Province, China (Grant No. 2025A1515011213) and the Science and Technology Development Fund of Macau SAR (No. 0020/2023/RIB1) . 
	
	  \bibliographystyle{elsarticle-harv}
	  \bibliography{reference}
	
	
	
	
\end{document}